\newtheorem{thm}{Theorem}[section]
\newtheorem{lemma}[thm]{Lemma}
\newtheorem{prop}[thm]{Proposition}
\newtheorem{qu}[thm]{Question}
\newtheorem{claim}[thm]{Claim}
\newtheorem{obs}[thm]{Observation}
\theoremstyle{definition}
\theoremstyle{plain}
\newcommand{\sm}{\setminus}
\newcommand{\eps}{\varepsilon}
\newcommand{\vphi}{\varphi}
\newcommand{\Expect}{\mathbb{E}}
\newcommand{\genericScale}{1}
\newcommand{\genericThickness}{0.65mm}
\newcommand{\genericCirclePt}{3pt}
\newcommand{\cupdot}{\mathbin{\mathaccent\cdot\cup}}
\title[Contagious sets in bootstrap percolation]{Contagious sets in a degree-proportional bootstrap percolation process} 
\author{Frederik Garbe and Richard Mycroft}
\address[Frederik Garbe,Richard Mycroft]{School of Mathematics,\\ University of Birmingham,\\ Birmingham B15 2TT,\\ United Kingdom}
\author{Andrew McDowell}
\address[Andrew McDowell]{Informatics Department,\\ King's College London,\\ London WC2R 2LS,\\ United Kingdom}
\email{fxg472@bham.ac.uk}
\email{andrew.mcdowell@kcl.ac.uk}
\email{r.mycroft@bham.ac.uk}
\thanks{RM supported by EPSRC grant EP/M011771/1.}
\subjclass[2010]{05C69, 60K35}
\begin{document}

\begin{abstract}
We study the following bootstrap percolation process: given a connected graph $G$, a constant $\rho \in [0, 1]$ and an initial set $A \subseteq V(G)$ of \emph{infected} vertices, at each step a vertex~$v$ becomes infected if at least a $\rho$-proportion of its neighbours are already infected (once infected, a vertex remains infected forever). Our focus is on the size $h_\rho(G)$ of a smallest initial set which is \emph{contagious}, meaning that this process results in the infection of every vertex of $G$.

Our main result states that every connected graph $G$ on $n$ vertices has $h_\rho(G) < 2\rho n$ or $h_\rho(G) = 1$ (note that allowing the latter possibility is necessary because of the case $\rho\leq\tfrac{1}{2n}$, as every contagious set has size at least one). This is the best-possible bound of this form, and improves on previous results of Chang and Lyuu and of Gentner and Rautenbach. We also provide a stronger bound for graphs of girth at least five and sufficiently small $\rho$, which is asymptotically best-possible.
\end{abstract}

\clearpage\maketitle
\thispagestyle{empty}

\section{Introduction}
We study a graph percolation process in which a number of vertices are initially infected, and then, for a fixed constant $\rho\in[0,1]$, after each time step an uninfected vertex will become infected if at least a $\rho$-proportion of its neighbours are already infected. Depending on the value of $\rho$ and the choice of initially infected vertices, this process may result in the infection spreading to the entire graph. We are primarily interested in identifying, for a fixed graph and value of $\rho$, the size of the smallest set that can be chosen to be initially infected which will result in the eventual infection of the entire graph.

This can be seen as a variant of the {\it bootstrap percolation process} which was first introduced by Chalupa, Leath, and Reich~\cite{ChLeRe79} as a simplification of the Ising model in statistical physics. For a fixed $r\in \mathbb N$, they considered the process in which vertices become infected when at least $r$ of their neighbours are infected, independently of their degree. Since then this case has often been called {\it $r$-neighbourhood bootstrap percolation}. Besides its applications in statistical physics, this threshold model was also used to describe problems in sociology, epidemiology, economics and computer science such as the diffusion of information and opinion in social networks, the spread of diseases, and cascading failures in financial or computer networks (see for example~\cite{BlEaKlKlTa11, DrRo09, KeKlTa03} and the references therein). 

The idea for a degree-proportional model dates back to Granovetter~\cite{Gr78}, based on the natural assumption that an individual may be more likely to be swayed by finding that a significant proportion of their friends hold a particular view, rather than simply encountering a fixed number of people who have adopted that position. Sociologists call this process {\it complex propagation} in contrast to {\it simple propagation} in which one active neighbour is enough to influence a vertex. For studies investigating this complex model, see for example \cite{CeMa07, CeEgMa07, Wa02}.

Formally, we define the following more general framework. Let $G$ be a graph and $\vphi:V(G)\rightarrow \mathbb R_{\geq 0}$ be a function. For a set $A\subseteq V(G)$ we define a set $H_{\vphi,G}(A)\subseteq V(G)$ iteratively in the following way. We set $A_0:=A$ and $A_{i+1}:=A_i\cup \{v\in V(G) : |N_G(v)\cap A_i|\geq \vphi(v)\}$. We call $(A_i)_{i\in\mathbb N}$ the {\it infecting sequence} in $G$ induced by $A$ with respect to $\vphi$. Note that $A_i\subseteq A_{i+1}$ for every $i\in\mathbb N$. Furthermore, if $A_i=A_{i+1}$ for some $i\in\mathbb N$, then $A_{i+1} = A_{i+2}$. So for a finite graph $G$ there must exist $k \in \mathbb N$ such that $A_k = A_{k+1} = A_{k+2} = \cdots$, and we set $H_{\vphi,G}(A):=A_k$ for this $k$. For simplicity we will write $H_{\vphi,G}(v)$ in place of $H_{\vphi,G}(\{v\})$ for a vertex $v\in V(G)$. We say that a vertex $v$ {\it becomes infected at step $i$}, if $v\in A_{i}\setminus A_{i-1}$, and for a set $S\subseteq H_{\vphi,G}(A)$ we say that $A$ {\it infects} $S$. We now call a set $A\subseteq V(G)$ a {\it contagious set} for $G$ with respect to $\vphi$ if $H_{\vphi,G}(A)=V(G)$. In the literature infected vertices are also called {\it active} and it is said that $A$ {\it percolates}, if $A$ is a contagious set. Sometimes a contagious set is also called a {\it perfect target set} or an {\it irreversible dynamic monopoly}; the latter in contrast to the different process of {\it dynamic monopolies} in which active vertices may become inactive again as investigated in~\cite{Pe98} and~\cite{Be01}. The special case of the constant function $\vphi(v)\equiv r$ is precisely $r$-neighbourhood bootstrap percolation.

A major focus in percolation theory has been to approximate, for various families of graphs, the {\it critical probability} which is the infimal density at which an independent random selection of initially infected vertices with this density is likely to be contagious. In an abundant line of research this problem was investigated for multidimensional grids~\cite{AiLe88, BaPe98, CeCi99, CeMa02, Ho03, BaBoMo09a, BaBoMo10, GrHoMo12, BaBoDuMo12}, the hypercube~\cite{BaBo06}, the Hamming torus~\cite{GrHoPfSi15} and various distributions of random graphs~\cite{BaPi07, JaLuTuVa12, AmFo14, KaMa16}. An example of a degree-proportional function $\vphi$ which was investigated is the so-called {\it majority bootstrap percolation}, i.e. $\vphi(v)=d_G(v)/2$; for this function the critical probability was studied for the hypercube in~\cite{BaBoMo09b}.

Another research direction is the extremal problem of bounding the minimal size of a contagious set in the bootstrap percolation process. For a given function $\vphi$ and a graph $G$ we denote this number by {\it $h_{\vphi}(G)$}. In the case of the $r$-neighbourhood bootstrap percolation this was again investigated for special families of graphs. Inspired by a Hungarian folklore problem Pete formulated this problem for finite multidimensional grids and gave first bounds on the size of a minimal contagious sets (see Balogh and Pete~\cite{BaPe98} for a summary of these results). The case $r=2$ was settled by Balogh and Bollob{\'a}s~\cite{BaBo06}. Recently exact asymptotics for the hypercube were determined by Morrison and Noel~\cite{MoNo16}. This proved a conjecture of Balogh and Bollob{\'a}s~\cite{BaBo06} and improved earlier bounds of Balogh, Bollob{\'a}s and Morris~\cite{BaBoMo10}. Additionally, Morrison and Noel also proved bounds for multidimensional rectangular grids. Furthermore, the minimal size of a contagious set has been investigated for expander graphs~\cite{CoFeKrRe15}, very dense graphs~\cite{FrPoRe15}, random graphs~\cite{JaLuTuVa12, AmFo12, FeKrRe16, AnKo17}, with additional Ore and Chv{\'a}tal-type degree conditions~\cite{DaFeLiMaPfUz16}, and in the setting of hypergraphs~\cite{BaBoMoRi12}.

A general bound on the minimal size of a contagious set for every function $\vphi$ and graph $G$ was provided by Ackermann, Ben-Zwi and Wolfovitz~\cite{AcBeWo10} who showed that
\begin{equation}
h_{\vphi}(G)\leq\sum_{v\in V(G)}\frac{\lceil\vphi(v)\rceil}{d_G(v)+1}\;.\label{eq:acker}
\end{equation}
This bound was also independently derived by Reichmann~\cite{Re12} for the special case of $r$-neighbourhood bootstrap percolation. Very recently this bound was improved by Cordasco, Gargano, Mecchia, Rescigno and Vaccaro in \cite{CoGaMeReVa16} to
\begin{equation}
h_{\vphi}(G)\leq\sum_{v\in V'}\min\left\{1,\frac{\lceil\vphi(v)\rceil}{d'(v)+1}\right\}\;,\label{eq:acker_improved}
\end{equation}
where $V'=\{v\in V(G)\mid d_G(v)\geq 2 \mbox{ or }\vphi(v)\neq 1\}$ and $d'(v) = |N_G(v)\cap V'|$.

In this paper we focus on general upper bounds on the size of a smallest contagious set in a degree-proportional bootstrap percolation process, i.e. $\vphi_{\rho}(v)=\rho d_G(v)$, where $\rho \in[0,1]$ is a fixed constant. Observe that if $G$ is not connected, then a smallest contagious set of $G$ is simply the union of smallest contagious sets, one from each connected component, so we need only consider connected graphs. Throughout this paper we will often shorten the notation of $\vphi_{\rho}$ to simply $\rho$, so for example we will write $h_{\rho}(G)$ instead of $h_{\vphi_{\rho}}(G)$. 

For majority bootstrap percolation (where $\rho = 1/2$), expression~\eqref{eq:acker} shows that for any graph $G$ we have $h_{1/2}(G) \leq |V(G)|/2$. This bound is best possible, as shown by taking $G$ to be a clique of even order. However, expression~\eqref{eq:acker} and also expression~\eqref{eq:acker_improved} may be far from best possible if $\rho$ is small and $G$ has many vertices of small degree. For example, for $1/n < \rho \ll 1$, let $K_{1,n-1}$ be the star on $n$ vertices with partite sets $\{v\}\cup B$. We add a perfect matching to $B$ and call the resulting graph $S$. Note that according to the notation of expression~\eqref{eq:acker_improved} we have $V'=V(S)$ and $d'(v)=d_S(v)$. Hence
$$h_{\rho}(S)=1<\rho n\ll \frac{n-1}{3}<\sum_{v\in V'}\frac{\lceil\rho d_S(v)\rceil}{d'(v)+1}\;.$$
For this reason attention has recently focused on providing different kinds of upper bounds which might work better for small values of $\rho$; in particular on upper bounds of the form $h_{\rho}(G) < C \rho n$ for some constant $C$. However, since every contagious set has size at least one, such bounds must necessarily exempt the case when $h_\rho(G) = 1$ (some previous works have expressed this exemption by a lower bound on the maximum degree of $G$ or the order of $G$, but these are easily seen to be equivalent). 
So, more precisely,  
the aim is to determine the smallest constant $C$ such that every connected graph $G$ fulfils  $h_{\rho}(G) < C \rho n$ or $h_{\rho}(G)=1$. Chang and Lyuu~\cite{ChLy15} observed that the complete graph provides a lower bound of order $O(\rho n)$ on $h_{\rho}(G)$. Indeed, for $n := \lfloor 1/\rho\rfloor+2$, every contagious set for $K_{n}$ has size at least $2\geq 2\rho n-4\rho$, from which we derive the following observation which shows that we cannot achieve $C<2$.

\begin{obs}\label{obs:example}
For every $\eps>0$ there exists $\rho>0$ and a connected graph $G$ on $n$ vertices such that $h_{\rho}(G)\neq 1$ and
$$h_{\rho}(G)\geq (2-\eps)\rho n\;.$$
\end{obs}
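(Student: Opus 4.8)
The plan is to prove this by an explicit construction, taking $G$ to be a complete graph $K_n$ with $n$ chosen suitably in terms of $\rho$, following the construction with complete graphs mentioned above. The first step is to determine $h_{\rho}(K_n)$ exactly. Since every vertex of $K_n$ has degree $n-1$ and any two vertices are adjacent, a set $A\subseteq V(K_n)$ with $|A|=k$ has $|N_G(v)\cap A|=k$ for every $v\notin A$; so if $k\geq\vphi_{\rho}(v)=\rho(n-1)$ then every vertex outside $A$ becomes infected in a single step, whereas if $k<\rho(n-1)$ then $A_1=A_0$ and the process halts immediately. Hence $h_{\rho}(K_n)=\lceil\rho(n-1)\rceil$, and since $\rho\leq 1$ this value is at most $n-1$, so a contagious set of this size genuinely exists.

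Next I would choose $n$ so that $\rho(n-1)$ lies strictly between $1$ and $2$, which forces $h_{\rho}(K_n)=2$. Taking $n:=\lfloor 1/\rho\rfloor+2$ does this: the bounds $1/\rho-1<\lfloor 1/\rho\rfloor\leq 1/\rho$ give $n-1>1/\rho$ and $n-2\leq 1/\rho$, whence $1<\rho(n-1)\leq 1+\rho<2$ (using $\rho<1$). Thus $h_{\rho}(K_n)=2\neq 1$, which is the first assertion of the observation. It then remains to verify $h_{\rho}(K_n)\geq(2-\eps)\rho n$, i.e. that $2\geq(2-\eps)\rho n$. From $\rho n=\rho\lfloor 1/\rho\rfloor+2\rho\leq 1+2\rho$ we obtain $(2-\eps)\rho n\leq(2-\eps)(1+2\rho)$, and the right-hand side is at most $2$ once $\rho$ is small enough in terms of $\eps$ (for instance $\rho\leq\eps/(2(2-\eps))$ suffices, which also ensures $\rho<1$ as used above). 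So the argument goes through provided we fix such a $\rho>0$ at the outset and then set $n:=\lfloor 1/\rho\rfloor+2$ and $G:=K_n$.

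I do not expect any genuine obstacle here: this is a short explicit construction rather than a substantive proof. The only point requiring a little care is the rounding — one wants $\rho(n-1)$ to sit just above an integer (here, just above $1$) so that the ceiling in $h_{\rho}(K_n)=\lceil\rho(n-1)\rceil$ jumps to the next integer, opening the factor-$2$ gap over $\rho n\approx\rho(n-1)$. Choosing $n$ close to $1/\rho$ is precisely what optimises the ratio $h_{\rho}(G)/(\rho n)$, since landing just above a larger integer $m$ would only yield a ratio near $1+1/m<2$.
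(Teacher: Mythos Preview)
Your proposal is correct and follows essentially the same construction as the paper: both take $G=K_n$ with $n:=\lfloor 1/\rho\rfloor+2$ and observe that any contagious set then has size at least $2$, which exceeds $(2-\eps)\rho n$ for sufficiently small $\rho$. You give more detail (computing $h_\rho(K_n)=\lceil\rho(n-1)\rceil$ exactly and working out explicit bounds on $\rho$), whereas the paper compresses the argument into the single inequality $2\geq 2\rho n-4\rho$ stated just before the observation.
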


The first bound of the form described above was due to Chang~\cite{Ch11}, who showed that every connected graph $G$ on $n$ vertices satisfies $h_{\rho}(G)=1$ or
\begin{equation}
h_{\rho}(G)\leq (2\sqrt{2}+3)\rho n<5.83\rho n\;.\label{eq:chang}
\end{equation}
Chang and Lyuu~\cite{ChLy15} improved this bound by showing that every such $G$ satisfies $h_{\rho}(G)=1$ or
\begin{equation*}
h_{\rho}(G)\leq 4.92\rho n\;.\label{eq:changlyuu}
\end{equation*}
This was the best general bound prior to this work. Very recently Gentner and Rautenbach~\cite{GeRa17} provided an upper bound which asymptotically matches the lower bound of Observation~\ref{obs:example} under the additional assumptions that $G$ has girth at least five and $\rho$ is sufficiently small. More precisely, they showed that for any $\eps > 0$, if $\rho$ is sufficiently small and $G$ has girth at least five, then $h_{\rho}(G)=1$ or
\begin{equation} \label{eq:GR}
h_{\rho}(G)\leq(2+\eps)\rho n\;.
\end{equation}
The first main result of this paper is the following theorem, which improves on each of the aforementioned results by giving the optimal bound of the described form.

\begin{thm}\label{thm:genbound}
For every $\rho\in (0,1]$, every connected graph $G$ of order $n$ has $h_{\rho}(G)=1$ or
\[h_{\rho}(G)< 2\rho n\;.\]
\end{thm}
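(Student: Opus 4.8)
The plan is to build a contagious set greedily, but the naive greedy approach of repeatedly adding the vertex that maximises the number of newly infected vertices does not obviously beat the $2\sqrt 2 + 3$ constant, so a more careful accounting is needed. The key idea I would pursue is a \emph{potential/charging} argument. For a set $A \subseteq V(G)$ let $I(A) := H_{\rho, G}(A)$ be the set it eventually infects, and consider the quantity $f(A) := |A| - 2\rho\,|I(A)|$; a contagious set $A$ with $f(A) < 0$ is exactly what we want (unless $|A| = 1$). Starting from $A = \emptyset$ we have $f(A) = 0$, and the goal is to reach a contagious set without ever letting $f$ increase — ideally making it strictly decrease at some point. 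So the heart of the matter is: given a non-contagious set $A$ with $f(A) \le 0$, find a vertex $v \notin I(A)$ such that adding $v$ does not increase $f$, i.e.\ such that $I(A \cup \{v\})$ contains at least $\tfrac{1}{2\rho}$ more vertices than $I(A)$ (when $\rho$ is not too large) or we can otherwise make progress.

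The first step is to reduce to the situation where $I(A) = \emptyset$ would be inconvenient, so instead I would work with the graph $G' := G - I(A)$ obtained by deleting the already-infected vertices, keeping track of how many neighbours in $I(A)$ each surviving vertex has. A vertex $v \in V(G')$ with $d_{G'}(v)$ remaining neighbours needs only $\lceil \rho d_G(v)\rceil - |N_G(v) \cap I(A)|$ further infected neighbours to become infected; call this its \emph{residual threshold} $t(v)$, and note $t(v) \le \rho d_{G'}(v) + 1$ roughly, with the crucial gain coming from vertices adjacent to the already-infected region. The second step is a local analysis in $G'$: I want to show that either there is a low-residual-threshold vertex whose infection cascades far, or the structure of $G'$ is constrained (e.g.\ every vertex has residual threshold close to its full proportional threshold, meaning $I(A)$ has essentially no boundary — but by connectivity $I(A)$ does have a boundary unless $I(A) = \emptyset$). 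The case $I(A) = \emptyset$, i.e.\ the very first vertex added, is where the factor $2$ (as opposed to something smaller) is forced and must be handled: picking a vertex $v$ of suitable degree and analysing the tree-like initial spread should give that $\{v\}$ alone infects more than $\tfrac{1}{2\rho}$ vertices, or else $v$ has small degree and one exploits that instead, eventually landing in the $h_\rho(G) = 1$ exemption when $\rho$ is tiny relative to $n$.

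The main obstacle — and where I expect the real work to be — is controlling the interaction between successive additions: when I add $v$ and it triggers a cascade, the cascade may be "wasteful," infecting vertices that a later choice would have infected anyway, so a purely greedy-per-step bound of the form "each added vertex buys $\ge \tfrac{1}{2\rho}$ new infections" may fail for the last few steps when little of the graph remains. The standard fix is an amortised argument: allow some steps to be locally bad provided earlier steps were good enough, formalised by choosing the vertices in a careful order (for instance, always extending along the boundary of the current infected set, in a BFS-like fashion, so that each new vertex has a guaranteed already-infected neighbour and hence residual threshold at most $\rho d_G(v)$, shaving off the $+1$ that otherwise accumulates). One then runs the process until $f$ would turn negative or until the whole graph is infected; a final separate argument shows that if the graph got fully infected while $f$ stayed at $0$, then in fact $|A| < 2\rho n$ strictly, or $A$ can be trimmed (remove one vertex and re-run: if it's still contagious we've strictly improved, and if not, the removed vertex together with the analysis above gives the strict inequality). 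For graphs of girth at least five the tree-like neighbourhoods make the initial-spread estimate much sharper, which is presumably how the stronger bound alluded to in the abstract is obtained, but for Theorem~\ref{thm:genbound} the girth-free argument with the $+1$ bookkeeping handled via boundary-BFS ordering should suffice.
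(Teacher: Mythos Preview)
Your proposal is a strategy sketch, not a proof, and the central step is exactly the one you leave open. The whole argument hinges on the assertion that at each stage (or in amortised form) you can add a vertex that buys at least $\tfrac{1}{2\rho}$ new infections, but you never establish this. Concretely, take a graph in which every vertex has degree $\lfloor 1/\rho \rfloor + 1$ (so every vertex has threshold exactly $2$). Then \emph{no} single vertex infects anything beyond itself, so your ``tree-like initial spread'' analysis for the first vertex cannot produce $\tfrac{1}{2\rho}$ infections; the $h_\rho(G)=1$ escape hatch does not apply either, since all degrees exceed $1/\rho$. You would then need to show that some \emph{pair} of vertices infects at least $1/\rho$ others, and so on, but nothing in your outline explains why this should hold. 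The BFS/boundary idea only guarantees that the next seed has one already-infected neighbour, reducing its residual threshold by one; it says nothing about how far the resulting cascade propagates, which is what the potential bound needs. The amortisation you allude to is entirely unspecified.

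The paper's proof goes in a quite different direction. It fixes a \emph{minimal counterexample} $G$ (vertex-minimal, then edge-minimal) and first proves that in such a $G$, every single vertex $v$ satisfies $|H_{\rho,G}(v)| < \tfrac{1}{2\rho}$ --- i.e.\ the \emph{opposite} of what your greedy step hopes for. This constraint is then leveraged structurally: partitioning $V(G)$ into degree classes $V_1 = \{v : d(v) \le 1/\rho\}$, $V_2 = \{v : 1/\rho < d(v) \le 2/\rho\}$ and $V_{\ge 2}\setminus V_2$, one shows (using both vertex- and edge-minimality) that every vertex has fewer than $\tfrac{1}{2\rho}$ neighbours in $V_1$, and every $V_1$-vertex has fewer than $\tfrac{1}{2\rho}$ neighbours in $V_2$. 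With these degree bounds in hand, a random-permutation argument on $V_{\ge 2}$ in the style of Ackerman--Ben-Zwi--Wolfovitz produces a contagious set whose expected size is bounded via a convexity/optimisation lemma by $2\rho n$, contradicting the counterexample assumption. None of this machinery --- the minimal-counterexample reduction, the threefold degree classification, the edge-minimality used to control $V_1$--$V_2$ edges, or the random ordering --- appears in your outline.
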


In particular, for connected graphs of order $n > 1/(2\rho)$ we always have $h_{\rho}(G)< 2\rho n$.
The second main result of this paper is a  stronger bound for graphs of girth at least five, obtained by combining our approach for Theorem~\ref{thm:genbound} with the method used by Gentner and Rautenbach~\cite{GeRa17} to prove~\eqref{eq:GR}.

\begin{thm}\label{thm:girthbound}
For every $\eps>0$ there exists $\rho_0>0$ such that for every $\rho\in(0,\rho_0)$ and every connected graph $G$ of order $n$ and girth at least $5$ we have $h_{\rho}(G)=1$ or
\[h_{\rho}(G) < (1+\eps)\rho n\;.\]
\end{thm}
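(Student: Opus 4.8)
The plan is to follow the same route as for Theorem~\ref{thm:genbound} --- reduce to infecting the high-degree vertices, then run a random greedy process --- but now using the absence of short cycles to push the constant down from $2$ to $1+\eps$. So fix $\eps>0$, let $\rho_0=\rho_0(\eps)>0$ be small, take $\rho\in(0,\rho_0)$ and a connected graph $G$ of order $n$ and girth at least $5$, and assume $h_\rho(G)\neq1$. If every vertex had degree at most $1/\rho$ then every vertex would have threshold $\lceil\rho d_G(v)\rceil=1$ and, $G$ being connected, a single vertex would be contagious; hence $\Delta(G)>1/\rho$ and in particular $n>1/\rho$. Call $v$ \emph{large} if $d_G(v)>1/\rho$ and \emph{small} otherwise, and let $L\neq\emptyset$ be the set of large vertices; each small vertex has threshold exactly $1$. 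The first step is the reduction: any $A$ with $L\subseteq H_{\rho,G}(A)$ is contagious, since otherwise $V(G)\sm H_{\rho,G}(A)$ is a nonempty set of small vertices and, as $G$ is connected and $H_{\rho,G}(A)\supseteq L\neq\emptyset$, some edge leaves $H_{\rho,G}(A)$, its endpoint outside being a small vertex with an infected neighbour --- a contradiction. So it suffices to find $A$ with $|A|<(1+\eps)\rho n$ infecting every large vertex.

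For this I would look at the components (``blobs'') of $G-L$: a blob $C$ consists of small vertices, so infecting one vertex of $C$ infects all of $C$, and $C$ is adjacent to $\beta(C):=|\{v\in L:N_G(v)\cap C\neq\emptyset\}|\ge1$ large vertices (a blob adjacent to no large vertex would be a component of $G$ of maximum degree at most $1/\rho$). Take a uniformly random ordering of $L$ and process its vertices in this order, keeping a set $I$ of vertices known to be infected: when processing $v$, if $v$ already has at least $\lceil\rho d_G(v)\rceil$ neighbours in $I$ do nothing, otherwise put $v$ into $A$; in either case then add to $I$ the vertex $v$ and every blob meeting $N_G(v)$ (infected via the threshold-$1$ cascade from $v$). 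Induction on the ordering shows every vertex ever placed in $I$ is genuinely infected by $A$, so $A$ infects $L$ and hence is contagious; it remains to bound $\Expect|A|$. The point that makes this workable is that a blob $C$ enters $I$ exactly when the first large vertex adjacent to $C$ in the random order is processed --- regardless of whether that vertex was added to $A$ --- so for $v$ adjacent to $C$ the event ``$C\in I$ when $v$ is processed'' is just ``$v$ is not the first large vertex adjacent to $C$'', of probability $1-1/\beta(C)$. Writing $X_v$ for the number of neighbours of $v$ in $I$ when $v$ is processed, we then have $v\in A$ iff $X_v<\lceil\rho d_G(v)\rceil$ and
\[\Expect X_v \;=\; \tfrac12\,|N_G(v)\cap L|\;+\;\sum_{C}|N_G(v)\cap C|\Bigl(1-\tfrac1{\beta(C)}\Bigr),\]
the sum over blobs $C$ meeting $N_G(v)$; so the whole problem reduces to showing $\sum_{v\in L}\Prob[X_v<\lceil\rho d_G(v)\rceil]<(1+\eps)\rho n$.

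That last inequality is where I expect the real work, and it is where the method of Gentner and Rautenbach and the refinement behind Theorem~\ref{thm:genbound} must be combined. The natural approach is a charging scheme: distribute the (expected) seed spent on each $v\in A$ over a set of vertices of $G$, using girth at least $5$ --- through the facts that every $N_G(v)$ is independent and every other vertex has at most one neighbour in $N_G(v)$, so any two large vertices have at most one common neighbour --- to make these sets essentially disjoint and each vertex's total charge at most $(1+\eps)\rho$. A large vertex most of whose neighbourhood lies in blobs private to it ($\beta(C)=1$) is charged to that private neighbourhood, which is unshared and of size more than $(1-\eps)/\rho$, giving rate essentially $\rho$; a large vertex whose neighbourhood is spread thinly over blobs each shared by several large vertices has $X_v$ concentrated well above $\lceil\rho d_G(v)\rceil$ by the random order, so contributes almost nothing; and a large vertex sharing one large blob with another large vertex is charged to that blob, whose size makes the rate well below $\rho$ while a single seed discharges both. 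The main obstacle is balancing all of this rigorously: making the disjointness of the charged sets precise, absorbing the loss from the ceiling $\lceil\rho d_G(v)\rceil$ (barely-large vertices are the reason the general bound is $2\rho n$, and are handled here by seeding the vertex itself rather than its neighbours), quantifying the concentration, and controlling the mixed and boundary cases; this is the step that dictates the choice of $\rho_0$. Once it is carried through, summing the contributions gives $\Expect|A|<(1+\eps)\rho n$, so some realisation of the random order yields the desired $A$.
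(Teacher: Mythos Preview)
Your route is genuinely different from the paper's, and the gap you yourself flag is real: the inequality $\sum_{v\in L}\Prob[X_v<\lceil\rho d_G(v)\rceil]<(1+\eps)\rho n$ is essentially the whole theorem, and you have not proved it. You have a taxonomy of cases and a charging heuristic, but no argument that the cases exhaust the possibilities, that the charged sets are actually disjoint, or that the concentration goes through --- the indicators ``$C\in I$ when $v$ is processed'' for distinct blobs $C$ are \emph{not} independent (they all depend on the position of $v$ in the same random order), and girth five does not by itself decouple them. Any rigorous version of your outline would have to reconstruct, in some form, the random-selection argument of Gentner and Rautenbach that you are implicitly re-deriving from scratch; what you have written is a plan for where that argument might live, not the argument.

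The paper sidesteps all of this by arguing via a minimal counterexample and reducing to a ready-made lemma. If $G$ is a vertex-minimal counterexample of order $n>\frac{1}{(1+\eps)\rho}$, one shows that every $v\in V_{\geq2}$ satisfies $|N(v)\cap V_1|<\frac{1}{(1+\eps)\rho}$: otherwise $|H_{\rho,G}(v)|>\frac{1}{(1+\eps)\rho}$, and a short degree count forces every component of $G-H_{\rho,G}(v)$ to have more than $\frac{1}{(1+\eps)\rho}$ vertices, so minimality applied to each component (together with Proposition~\ref{prop:activate}) yields a contagious set of size below $(1+\eps)\rho n$. Since $d(v)>1/\rho$ for $v\in V_{\geq2}$, this gives $|N(v)\cap V_1|<\frac{1}{1+\eps}d(v)$ for every large $v$, which is precisely the extra hypothesis under which the Gentner--Rautenbach argument (stated as Lemma~\ref{lem:girth}) already produces $h_\rho(G)<(1+\eps)\rho n$ rather than $(2+\eps)\rho n$: under that hypothesis their preliminary set $X_0$ can be taken empty, so only the second phase of their construction is needed. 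The paper's contribution, then, is not a new random process but a structural reduction that lets the existing one be run with no slack; your proposal instead attempts to build a self-contained replacement for that process, and the hard step is left undone.
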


Theorem~\ref{thm:girthbound} is also asymptotically best possible, as shown by the following example of the balanced $(\lfloor1/\rho\rfloor+1)$-regular tree. 

\begin{obs}\label{obs:exampleTree}
For every $\eps>0$, every $\rho_0>0$ and every $N_0\in\mathbb N$ there exists $\rho\in(0,\rho_0)$ and a connected graph $G$ with girth at least $5$ on $n>N_0$ vertices such that
$$h_{\rho}(G)> (1-\eps)\rho n\;.$$
\end{obs}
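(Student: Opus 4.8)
The plan is to take $G$ to be a balanced $d$-regular tree of large depth $\ell$, where $d:=\lfloor1/\rho\rfloor+1$, and to fix the values of $\rho$ (small) and $\ell$ (large) only at the end; concretely I would set $\rho:=1/(d-1)$, so that $\lfloor1/\rho\rfloor+1=d$ and $\rho d=d/(d-1)\in(1,2)$ for $d\ge3$. Here by a \emph{balanced $d$-regular tree of depth $\ell$} I mean the rooted tree in which the root has $d$ children, every other non-leaf vertex has exactly $d-1$ children, and every leaf lies at depth $\ell$, so that every non-leaf vertex has degree $d$, every leaf has degree $1$, and $G$ is a connected tree (in particular of girth at least $5$). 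The first step is to record the local behaviour of the process on $G$: since $1<\rho d<2$, a non-leaf vertex not in the initial set becomes infected exactly once at least two of its neighbours are infected, whereas a leaf becomes infected as soon as its unique neighbour is infected.

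The heart of the argument is the lower bound $h_\rho(G)\ge d(d-1)^{\ell-2}$, namely the number of vertices at depth $\ell-1$, which I would call the \emph{penultimate} vertices; I would establish it by a disjoint-bundles counting argument. For each penultimate vertex $g$, let $B_g$ consist of $g$ together with its $d-1$ leaf-children; distinct penultimate vertices give disjoint bundles, so it suffices to show that every contagious set $A$ meets every $B_g$. Suppose to the contrary that $A\cap B_g=\emptyset$ for some $g$. A leaf-child of $g$ has only $g$ as a neighbour, so it can become infected only after $g$ does; hence at every step strictly before $g$ becomes infected, the parent of $g$ is the only infected neighbour $g$ can have, so $g$ never attains two infected neighbours and is therefore never infected — and then neither are its leaf-children, contradicting that $A$ is contagious. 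Consequently $|A|\ge\sum_g|A\cap B_g|\ge d(d-1)^{\ell-2}$.

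It then remains to compare $d(d-1)^{\ell-2}$ with $(1-\eps)\rho n$. Summing a geometric progression gives $n=1+d\bigl((d-1)^\ell-1\bigr)/(d-2)$, and with $\rho=1/(d-1)$ one obtains $\rho n=\tfrac1{d-1}+\tfrac{d((d-1)^\ell-1)}{(d-1)(d-2)}$, whose leading term as $\ell\to\infty$ is $\tfrac{d}{d-2}(d-1)^{\ell-1}$, while $d(d-1)^{\ell-2}=\tfrac{d}{d-1}(d-1)^{\ell-1}$; the ratio of the two leading coefficients is $(d-2)/(d-1)$, which exceeds $1-\eps$ precisely when $d>1+1/\eps$. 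So I would make the choices in the following order: first pick an integer $d\ge3$ with $d>1+1/\eps$ and $1/(d-1)<\rho_0$; then set $\rho:=1/(d-1)$, so that $\rho\in(0,\rho_0)$ and $\lfloor1/\rho\rfloor+1=d$; finally pick $\ell$ large enough that $n>N_0$ and that the lower-order terms above are dominated, whereupon a short calculation yields $(1-\eps)\rho n<d(d-1)^{\ell-2}\le h_\rho(G)$, as required. The only point requiring genuine thought rather than bookkeeping is the lower bound: the key observation is that each penultimate vertex needs one of its two required infected neighbours to be supplied ``from below'', and since its leaf-children can only be infected through it, this forces at least one vertex of the initial set into its own bundle.
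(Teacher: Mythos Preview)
Your proof is correct and uses essentially the same idea as the paper: partition the vertices one level above the leaves into disjoint ``bundles'' (each such vertex together with its leaf-children) and argue that any contagious set must meet every bundle, since a penultimate vertex with no initially infected vertex in its bundle can acquire at most one infected neighbour (its parent) and therefore is never infected. The paper carries this out with a depth-$2$ tree (so $n=1+d^2$ and $h_\rho(G)=d$ exactly), which keeps the arithmetic shorter and avoids the asymptotic comparison in $\ell$; your version with general depth $\ell$ is a mild generalisation that decouples the size condition $n>N_0$ from the choice of $d$, at the cost of a limiting calculation, but the key lower-bound argument is identical.
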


\begin{proof}
Given $\eps, \rho_0$ and $N_0$, choose $\rho \in (0, \rho_0)$ sufficiently small so that $d:=\lfloor1/\rho\rfloor+1$ fulfils $d>\max\{2/\eps,\sqrt{N_0-1}\}$. Let $G$ be the balanced $d$-regular tree with root $v$ of order $n:=1+d^2>N_0$, which consists of three `tiers' $T_1=\{v\},T_2=N_G(v)$ and $T_3= V(G) \sm (\{v\} \cup N_G(v))$ such that $|T_2|=d$ and $|T_3|=d(d-1)$. We will show that $h_{\rho}(G)=d$ and therefore that
$$h_{\rho}(G)=d=\frac{d}{\rho (1+d^2)}\rho n\geq \frac{(d-1)d}{1+d^2}\rho n=\left(1-\frac{d+1}{1+d^2}\right)\rho n>(1-2/d)\rho n>(1-\eps)\rho n\;.$$
First note that $T_2$ infects the whole graph $G$, so $h_{\rho}(G) \leq d$. On the other hand, observe that every vertex in $T_2$ needs at least two infected neighbours in order to become infected itself. For each $u \in T_2$ write $T_u:=\{u\}\cup N_G(u)\setminus\{v\}$. Then, because of our previous observation, every contagious set $A \subseteq V(G)$ fulfils $|A\cap T_u|\geq 1$ for each $u\in T_2$. Since the sets $T_u$ are pairwise disjoint it follows that $|A|\geq |T_2|=d$, so $h_{\rho}(G) \geq d$.
\end{proof}

Finally, we note that the extremal construction for Observation~\ref{obs:example} relies on the fact that $n$ is chosen close to $1/\rho$. So it is natural to ask, for fixed $\rho \in (0, 1]$, whether stronger bounds could be given for sufficiently large $n$. We give a detailed discussion of this aspect in Section~\ref{sec:remarks}. In particular, we construct arbitrary large graphs with $h_{\rho}(G)\geq(1+c_{\rho})\rho n$ where $c_\rho > 0$ is a fixed constant depending only on $\rho$, showing that we cannot hope for a bound of the form $(1+o_n(1)) \rho n$.

The next section is the main part of the paper containing the proofs of Theorem~\ref{thm:genbound} and Theorem~\ref{thm:girthbound}. We conclude with the discussion of some related open problems in the last section of the paper. Given a graph $G$ we write $n(G)$ for the order of $G$, and for any subset $S \subseteq V(G)$ we write $G-S$ for the induced subgraph $G[V(G) \sm S]$.

\section{Proofs of the theorems}
\subsection{Proof of Theorem~\ref{thm:genbound} }
The underlying idea we use to prove Theorem~\ref{thm:genbound} is the following. First, we suppose the existence of a minimal counterexample to our assertion and investigate its structure, categorising the vertices according to their degree in relation to $\rho$. In this way we can distinguish between vertices according to the number of infected neighbours necessary to cause their own infection. Next, using the minimality of this counterexample, we derive several degree conditions; in particular we bound the number of edges between low degree and high degree vertices. Finally, we show that we can infect the whole graph by a certain random selection of vertices just among the vertices of high degree. This will contradict the existence of a minimal counterexample. Our random selection follows the approach of Ackerman, Ben-Zwi and Wolfovitz~\cite{AcBeWo10} used to establish the bound~\eqref{eq:acker}, whilst the idea of distinguishing between low and high degree vertices was used by~Chang~\cite{Ch11} to prove the bound~\eqref{eq:chang}. The principal difference in our proof is that our analysis of the degree conditions is more detailed, as we distinguish more classes of degree, and that we take advantage of the structure of a minimal counterexample. During the proof we will work with the following easy observation.

\begin{prop}\label{prop:activate}
Suppose that $\rho\in[0,1]$ and $G$ is a graph. Let $S\subseteq V(G)$ and set $G'=G-S$. If $A\subseteq V(G')$ is a contagious set for $G'$ with respect to $\rho$, then $A\cup S$ is a contagious set for $G$ with respect to $\rho$.
\end{prop}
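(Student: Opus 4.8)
The plan is to show directly that the infecting sequence in $G$ induced by $A \cup S$ dominates, step by step, the infecting sequence in $G'$ induced by $A$, so that once the latter has infected all of $V(G')$ the former has infected all of $V(G')$ together with $S$, which is all of $V(G)$. Write $(A_i)_{i \in \mathbb{N}}$ for the infecting sequence in $G'$ induced by $A$ with respect to $\rho$, and write $(B_i)_{i \in \mathbb{N}}$ for the infecting sequence in $G$ induced by $A \cup S$ with respect to $\rho$. The key claim, which I would prove by induction on $i$, is that $A_i \cup S \subseteq B_i$ for every $i \in \mathbb{N}$.

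The base case $i = 0$ is immediate: $A_0 \cup S = A \cup S = B_0$. For the inductive step, suppose $A_i \cup S \subseteq B_i$; I want $A_{i+1} \cup S \subseteq B_{i+1}$. Since $S \subseteq B_0 \subseteq B_{i+1}$, it suffices to check $A_{i+1} \subseteq B_{i+1}$. Let $v \in A_{i+1}$. If $v \in A_i$ then $v \in B_i \subseteq B_{i+1}$ by the inductive hypothesis, so we may assume $v$ became infected at step $i+1$ in $G'$, i.e.\ $|N_{G'}(v) \cap A_i| \geq \rho\, d_{G'}(v)$. Here the only subtlety is the degree: we need the threshold $\varphi_\rho(v) = \rho\, d_G(v)$ used in $G$ to be no larger than the threshold $\rho\, d_{G'}(v)$ that $v$ met in $G'$. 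But this fails in general, since deleting $S$ can only decrease degrees, so $d_{G'}(v) \leq d_G(v)$ and hence $\rho\, d_{G'}(v) \leq \rho\, d_G(v)$. The resolution is that in $G$ the vertex $v$ has \emph{more} infected neighbours available: we have $N_{G'}(v) \subseteq N_G(v)$, and also every vertex of $N_G(v) \cap S$ is infected (it lies in $S \subseteq B_i$), so
\[
|N_G(v) \cap B_i| \;\geq\; |N_{G'}(v) \cap A_i| + |N_G(v) \cap S| \;\geq\; \rho\, d_{G'}(v) + |N_G(v) \cap S|,
\]
using the inductive hypothesis $A_i \subseteq B_i$ for the first summand and disjointness of $V(G')$ and $S$ to avoid double-counting. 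Since $d_G(v) = d_{G'}(v) + |N_G(v) \cap S|$ and $\rho \leq 1$, the right-hand side is at least $\rho\, d_{G'}(v) + \rho\,|N_G(v) \cap S| = \rho\, d_G(v) = \varphi_\rho(v)$. Therefore $v \in B_{i+1}$, completing the induction.

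Finally, since $A$ is contagious for $G'$, there is some $k$ with $A_k = V(G')$, and then $B_k \supseteq A_k \cup S = V(G') \cup S = V(G)$, so $A \cup S$ is contagious for $G$. The one point to watch is exactly the degree bookkeeping in the inductive step — making sure the extra neighbours in $S$ compensate for the larger threshold that comes from measuring degree in $G$ rather than in $G'$ — but as shown this works out precisely because $\rho \leq 1$; there is no real obstacle.
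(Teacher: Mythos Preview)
Your proof is correct and follows essentially the same approach as the paper's own proof: both argue by induction on $i$ that $A_i \cup S \subseteq B_i$ (in the paper's notation, $Y_i \cup S \subseteq X_i$), using the decomposition $N_G(v) = N_{G'}(v) \cupdot (N_G(v) \cap S)$ and the fact that $\rho \leq 1$ to show that the extra infected neighbours in $S$ compensate for the larger threshold $\rho d_G(v)$. Your exposition is slightly more explicit about why $\rho \leq 1$ is needed, but the argument is the same.
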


\begin{proof}
Let $(X_i)$ be the infecting sequence in $G$ induced by $A\cup S$ with respect to $\rho$ and let $(Y_i)$ be the infecting sequence in $G'$ induced by $A$ with respect to $\rho$. We show by induction on $i$ that $Y_i\cup S\subseteq X_i$ and therefore $V(G)=H_{\rho,G'}(A)\cup S\subseteq H_{\rho,G}(A\cup S)$. For the base case we certainly have $Y_0\cup S=A\cup S=X_0$. So assume the statement is true for $i\in \mathbb N$. Let $v\in Y_{i+1}\setminus Y_{i}$. We then have that $|N_{G'}(v)\cap Y_i|\geq \rho d_{G'}(v)$. We show that $|N_{G}(v)\cap X_i|\geq \rho d_G(v)$ and therefore $v\in X_{i+1}$. For this define $M(v):=N_G(v)\cap S$ and note that $N_G(v)=N_{G'}(v)\cupdot M(v)$ by the definition of $G'$. In particular we have that $d_G(v)=d_{G'}(v)+ |M(v)|$. Since $S\cup (N_{G'}(v)\cap Y_i)\subseteq X_i$ by the induction hypothesis, we can deduce that
$$|N_G(v)\cap X_i|\geq |M(v)|+|N_{G'}(v)\cap Y_i|\geq |M(v)|+\rho d_{G'}(v)\geq\rho(|M(v)|+d_{G'}(v))=\rho d_G(v)\;.$$

\end{proof}

We can now turn to the proof of Theorem~\ref{thm:genbound}. 

\begin{proof}[Proof of Theorem~\ref{thm:genbound}]
Observe first that if $G$ is a connected graph of order $n \leq 1/\rho + 1$ then $h_\rho(G) = 1$. Indeed, every vertex $v$ in such a graph has $d(v) \leq 1/\rho$ and so would require only one infected neighbour for its infection; since $G$ is connected it would then follow that any vertex of $G$ forms a contagious set of size one, so $h_{\rho}(G)=1$.

It therefore suffices to prove that every connected graph $G$ of order $n > 1/(2\rho)$ fulfils $h_{\rho}(G) < 2\rho n$, and the rest of the proof is devoted to proving this statement by considering a minimal counterexample (the apparently weaker-than-necessary bound on $n$ in this statement will prove convenient in appealing to minimality). We may assume for this that $\rho \in (0,1/2)$, as otherwise the statement is trivial. So fix $\rho \in (0, 1/2)$, and let $\mathcal{G}$ be the set of \emph{counterexamples} for this value of $\rho$ (in other words, $\mathcal{G}$ is the set of all connected graphs $G$ with $n(G) > 1/(2\rho)$ such that $h_{\rho}(G)\geq 2\rho n(G)$). Suppose for a contradiction that $\mathcal{G}$ is non-empty (i.e. that a counterexample exists). Let $n = \min_{G \in \mathcal{G}} n(G)$, and choose $G \in \mathcal{G}$ with $n(G) = n$ such that $e(G) \leq e(H)$ for every $H\in \mathcal{G}$ with $n(H) = n$. Thus $G$ is a minimal counterexample according to its number of vertices and (among those) to its number of edges. Moreover, we must have
\begin{equation}
n > 1/\rho+1\;,\label{eq:minvert}
\end{equation}
as otherwise our initial observation and the fact that $n > 1/(2\rho)$ together imply that $h_\rho(G) = 1 < 2 \rho n$, so $G$ would not be a counterexample.
The vertex-minimality of $G$ will be used to prove the following claim.

\begin{claim}\label{clm:vertex}
For every $v\in V(G)$ we have that $|H_{\rho,G}(v)|< 1/(2\rho)$.
\end{claim}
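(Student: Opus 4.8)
The plan is to argue by contradiction, exploiting the vertex-minimality of $G$. Suppose some vertex $v \in V(G)$ has $|H_{\rho,G}(v)| \geq 1/(2\rho)$, and set $S := H_{\rho,G}(v)$. If $S = V(G)$ then $\{v\}$ is a contagious set for $G$, so $h_{\rho}(G) = 1 < 2\rho n$ (since $n > 1/(2\rho)$), contradicting $G \in \mathcal{G}$; hence we may assume $S \subsetneq V(G)$ and put $G' := G - S$, a non-empty graph on $n - |S| < n$ vertices (note $v \in S$).

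The key step is to show that every connected component $C$ of $G'$ has $n(C) > 1/(2\rho)$. Since $G$ is connected, $C$ contains a vertex $w$ with a neighbour in $S$, so $|N_G(w) \cap S| \geq 1$. Moreover, since $w \notin S = H_{\rho,G}(v)$ we must have $|N_G(w) \cap S| < \rho\, d_G(w)$ (otherwise $w$ would become infected once all of $S$ is), and combining this with $|N_G(w) \cap S| \geq 1$ forces $d_G(w) > 1/\rho$. Therefore
\[
d_C(w) = d_G(w) - |N_G(w) \cap S| > (1-\rho)\, d_G(w) > \frac{1-\rho}{\rho} > \frac{1}{2\rho},
\]
where the last inequality uses $\rho < 1/2$. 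Hence $n(C) \geq d_C(w) + 1 > 1/(2\rho)$, as claimed. This step is the crux of the argument: it guarantees that we never need to apply minimality to a component so small that the bound $2\rho\, n(C)$ fails to exceed $1$, and it is the only place where the assumption $\rho < 1/2$ is used.

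With this in hand the proof concludes quickly. Each component $C$ of $G'$ is a connected graph with $1/(2\rho) < n(C) < n$, so by the vertex-minimality of $G$ it is not a counterexample, i.e.\ $h_{\rho}(C) < 2\rho\, n(C)$. Summing over the components, which partition $V(G')$, gives $h_{\rho}(G') < 2\rho(n - |S|)$. On the other hand, $\{v\}$ infects all of $S$, so if $A$ is any contagious set for $G'$ then, by the monotonicity of the infecting process together with Proposition~\ref{prop:activate}, the set $A \cup \{v\}$ is a contagious set for $G$; consequently
\[
h_{\rho}(G) \leq h_{\rho}(G') + 1 < 2\rho(n - |S|) + 1 = 2\rho n + 1 - 2\rho|S| \leq 2\rho n,
\]
the final inequality using $|S| \geq 1/(2\rho)$. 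This contradicts $h_{\rho}(G) \geq 2\rho n$, completing the proof of the claim.
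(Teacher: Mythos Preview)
Your proof is correct and follows essentially the same approach as the paper's: assume some $v$ has $|H_{\rho,G}(v)|\ge 1/(2\rho)$, show every component of $G-H_{\rho,G}(v)$ has more than $1/(2\rho)$ vertices, apply vertex-minimality to each component, and combine the resulting contagious sets with $\{v\}$ via Proposition~\ref{prop:activate}. The only cosmetic difference is that you bound $d_C(w)$ directly to force $n(C)>1/(2\rho)$, whereas the paper assumes $n(C)\le 1/(2\rho)$ and derives the contradiction $\rho>1/2$; the underlying inequality is the same.
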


\begin{proof}
For the sake of a contradiction assume that there exists $v\in V(G)$ such that $|H_{\rho, G}(v)|\geq 1/(2\rho)$. We set $G':=G-H_{\rho, G}(v)$ and denote by $\mathcal{C}$ the set of connected components of~$G'$. Note first that $\{v\}$ cannot infect all vertices of $G$, as otherwise we would have $h_{\rho}(G)=1=2\rho\cdot 1/(2\rho)<2\rho n$ by~\eqref{eq:minvert}, contradicting our assumption that $G$ is a counterexample. So $\mathcal{C}$ is non-empty. Now assume that there is a component $C\in\mathcal{C}$ which contains at most $1/(2\rho)$ vertices. As $G$ is connected, there exists a vertex $u \in V(C)$ with at least one neighbour in $H_{\rho, G}(v)$. Define $m(u):=|N_G(u)\cap H_{\rho,G}(v)|$, so $m(u) \geq 1$. Since $u\notin H_{\rho, G}(v)$, we derive the inequality
\[1/(2\rho)+m(u) > |N_G(u)\cap C|+m(u)=d_G(u)>1/\rho \cdot m(u)\;.\]
As $m(u) \geq 1$, this leads to $\rho >1/2$, which is a contradiction. Therefore each connected component of $G'$ contains more than $1/(2\rho)$ vertices. Since $G$ was a minimal counterexample, it follows that for each $C\in\mathcal{C}$ there exists a subset $A_C\subseteq V(C)$ with $|A_C|<2\rho |V(C)|$ which is a contagious set, that is, such that $H_{\rho,C}(A_C)=V(C)$. We then have $H_{\rho,G'}(\bigcup_{C\in\mathcal{C}}A_C)=V(G')$. Since $v$ infects $H_{\rho, G}(v)$ and, by Proposition~\ref{prop:activate} we have that $\bigcup_{C\in\mathcal{C}}A_C\cup H_{\rho, G}(v)$ infects $G$, it follows that $H_{\rho,G}(\bigcup_{C\in\mathcal{C}}A_C\cup\{v\})=V(G)$. Hence
$$h_{\rho}(G)\leq1+\sum_{C\in\mathcal C}|A_C|<2\rho\left(|H_{\rho, G}(v)|+\sum_{C\in\mathcal C} |V(C)|\right)\leq2\rho n\;,$$
contradicting our assumption that $G$ is a counterexample.
\end{proof}

We now classify the vertices of $G$ according to their degree by defining 
\begin{align*}
&V_1:=\{v\in V(G)\mid d_G(v)\leq 1/\rho\}, \\
&V_{\geq 2}:=V(G)\setminus V_1\textrm{ , and }\\
&V_2:=\{v\in V_{\geq 2}\mid d_G(v)\leq 2/\rho\}\;.
\end{align*}

Note that $V_1$ is the set of vertices which become infected if just one of their neighbours is infected, $V_{\geq 2}$ is the set of vertices which cannot be infected by just one infected neighbour, and $V_2$ is the set of vertices which can be infected by two infected neighbours, but not by just one infected neighbour. Claim~\ref{clm:vertex} together with the minimality of $G$ will imply the following claim about the size of neighbourhoods of vertices in these sets.
\begin{claim}\label{clm:vertdeg} The following holds.
\begin{enumerate}[noitemsep, label=(\roman*)]
 \item $|N_G(v)\cap V_1|< 1/(2\rho)$ for every $v\in V$. \label{clm:degv1}
 \item The induced subgraph $G[N(v)\cap V_2]$ is a clique for every $v \in V_{1}$. \label{clm:clique}
 \item $|N_G(v)\cap V_2|<1/(2\rho)$ for every $v\in V_1$. \label{clm:degv2}
\end{enumerate} 
\end{claim}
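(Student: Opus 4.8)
The plan is to prove each part using the vertex-minimality of $G$ together with Claim~\ref{clm:vertex}, which controls the size of the set infected by a single vertex. The unifying idea is that if some vertex $v$ had too many neighbours of low degree, then infecting $v$ (possibly together with a single neighbour) would infect a large set, contradicting Claim~\ref{clm:vertex}; and if the edge-minimality can be invoked, deleting a cleverly chosen edge and applying minimality yields a small contagious set that also works in $G$.

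For part~\ref{clm:degv1}: suppose $v \in V(G)$ has $|N_G(v) \cap V_1| \geq 1/(2\rho)$. Infect $v$; then every neighbour of $v$ in $V_1$ becomes infected at step~$1$ (since each such vertex has degree at most $1/\rho$, one infected neighbour suffices). Hence $H_{\rho,G}(v) \supseteq \{v\} \cup (N_G(v)\cap V_1)$, which has size at least $1 + 1/(2\rho) > 1/(2\rho)$, contradicting Claim~\ref{clm:vertex}.

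For part~\ref{clm:clique}: suppose $v \in V_1$ and two vertices $x, y \in N_G(v) \cap V_2$ are non-adjacent. The idea is to add the edge $xy$: call the new graph $G^+$. Adding an edge does not decrease degrees, so one checks that $h_\rho(G^+) \geq h_\rho(G)$ would make $G^+$ a counterexample with the same order and more edges — so this direction is not immediately the edge-minimality argument. Instead I expect the right move is the reverse: since $x,y \in V_2$, each needs only two infected neighbours; infecting $v$ infects $x$ and $y$ is false in general, so one should instead infect $v$ together with one further vertex, or argue via deleting an edge incident to $v$. The cleanest approach: because $v \in V_1$ and $d_G(v) \le 1/\rho$, the vertex $v$ has at most $1/\rho$ neighbours; the obstacle here is that making $G[N(v)\cap V_2]$ a clique must come from edge-minimality, so one deletes an edge $e$ of $G$ not incident to the pair $x,y,v$ and shows percolation still works — but the genuinely delicate point is identifying which edge deletion keeps $G$ connected and still forces full infection. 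I anticipate the actual argument runs: if $x,y\in N(v)\cap V_2$ are non-adjacent, consider $G' = G - xy'$ is not available since the edge is absent; rather, one shows that infecting $\{v, x\}$ already infects $y$ (as $y$ then has the two infected neighbours it needs, one being $v$, and a second must be arranged), whence by iterating one builds a large infected set contradicting Claim~\ref{clm:vertex} applied cleverly, or one uses that $\{v\}\cup N_G(v)$ being infected forces all of $V_2\cap N(v)$ and then minimality on a component. This part is where the subtlety lies, and I expect it is handled by a short argument showing $H_{\rho,G}(\{x,y\})$ or $H_{\rho,G}(v)$ is forced to be large unless the clique condition holds.

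For part~\ref{clm:degv2}: given parts~\ref{clm:clique} and~\ref{clm:degv1}, suppose $v \in V_1$ has $|N_G(v)\cap V_2| \geq 1/(2\rho)$. By part~\ref{clm:clique}, $W := N_G(v)\cap V_2$ induces a clique of size $\ge 1/(2\rho)$, so every $w \in W$ has $d_G(w) \geq |W| - 1 + 1 = |W|$ (counting its clique-neighbours and $v$); but also $w \in V_2$ forces $d_G(w) \leq 2/\rho$, so $|W| \le 2/\rho$ — consistent, not yet a contradiction. The real push: infect $v$ and one vertex $w_0 \in W$. Then $w_0$ is infected, $v$ is infected, and each other $w \in W$ now has the two infected neighbours $v$ and $w_0$ (using the clique structure), so $w \in V_2$ becomes infected at step~$1$. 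Hence $H_{\rho,G}(\{v,w_0\}) \supseteq \{v\} \cup W$, of size $\ge 1 + 1/(2\rho)$. To convert this into a contradiction with Claim~\ref{clm:vertex} (which is about single-vertex infection), note that $v \in V_1$ so actually a single infected neighbour $u$ of $v$ (which exists since $G$ is connected and $n>1$) has $H_{\rho,G}(u) \ni v$, and then the same cascade gives $|H_{\rho,G}(u)| \ge 1 + |W| > 1/(2\rho)$ provided $W \subseteq H_{\rho,G}(u)$ — one must check the cascade from $v$ alone suffices, i.e. whether $v$ plus the clique structure already forces $W$; if each $w\in W$ needs two infected neighbours we may need $v$ and one element of $W$ both present, so we pick $u \in W$ if possible, giving $\{u\}$ infecting $v$ infecting (with $u$) all of $W$. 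The main obstacle overall is part~\ref{clm:clique}: establishing the clique property requires the right local surgery (edge addition/deletion or a two-vertex infection argument), and making the connectivity and percolation bookkeeping go through is the delicate step; once that is in hand, parts~\ref{clm:degv1} and~\ref{clm:degv2} follow from straightforward one- or two-vertex infection cascades contradicting Claim~\ref{clm:vertex}.
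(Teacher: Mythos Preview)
Your arguments for parts~\ref{clm:degv1} and~\ref{clm:degv2} are essentially correct and match the paper: for~\ref{clm:degv1}, infecting $v$ picks up all of $N_G(v)\cap V_1$ at once; for~\ref{clm:degv2}, you correctly land on the right move at the end --- take $u \in W = N_G(v)\cap V_2$, so $\{u\}$ infects $v$ (as $v\in V_1$), and then the clique structure from~\ref{clm:clique} means every other $w\in W$ has two infected neighbours $u$ and $v$, giving $|H_{\rho,G}(u)|\ge 1+|W|>1/(2\rho)$.

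Part~\ref{clm:clique}, however, has a genuine gap: none of the approaches you sketch actually goes through. Adding the edge $xy$ increases the edge count, so edge-minimality says nothing. Deleting some unrelated edge is not specified and would not obviously help. Infecting $\{v,x\}$ does not infect $y$, since $y$'s only infected neighbour is $v$ (precisely because $xy\notin E(G)$). And appealing to the size of $H_{\rho,G}(\{x,y\})$ is useless, since Claim~\ref{clm:vertex} only bounds single-vertex closures.

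The paper's key idea, which you did not find, is a specific edge surgery: delete \emph{both} edges $vx$ and $vy$, and add the edge $xy$. This produces a graph $H$ on the same vertex set with \emph{one fewer} edge. One then checks (by a careful induction on the infection steps, with a short case analysis for the vertices $x,y,v$) that any contagious set for the component of $H$ containing $x$ and $y$ also infects that component in $G$. Depending on whether $H$ is connected or splits into two components (and on the sizes of those components), one then invokes either the edge-minimality or the vertex-minimality of $G$ to obtain a contagious set for $G$ of size below $2\rho n$, the desired contradiction. The surgery is tailored so that the degrees of $x$ and $y$ are preserved while $v$'s degree drops, and the transfer of the infection back to $G$ works because $v\in V_1$ can always be re-infected by a single neighbour.
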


\begin{proof}
To prove~\ref{clm:degv1} assume for a contradiction that there exists $v\in V$ such that $|N(v)\cap V_1|\geq 1/(2\rho)$. If $v$ is initially infected, then in the first time step every vertex of $N(v) \cap V_1$ will become infected. So $|H_{\rho, G}(v)|\geq |\{v\}\cup (N(v)\cap V_1)|> 1/(2\rho)$ contradicting Claim~\ref{clm:vertex}. 

For~\ref{clm:clique}, assume for a contradiction that there are distinct $x,y \in N(v)\cap V_2$ such that $\{x,y\}\notin E(G)$.  
Let $H$ be the graph formed from $G$ by removing the edges $\{v,x\}$ and $\{v,y\}$ and adding the edge $\{x,y\}$, and let $C$ be the connected component of $H$ containing $x$ and~$y$. 

We claim that every contagious set $A \subseteq V(C)$ for $C$ also infects $V(C)$ in $G$.
To see this, let $(X_i)$ be the infecting sequence in $G$ induced by $A$ with respect to $\rho$ and let $(Y_i)$ be the infecting sequence in $C$ induced by $A$ with respect to $\rho$. We now prove by induction on $i$ that $Y_i \subseteq X_{2i}$ for each $i\in\mathbb N$. The base case is given by $Y_0=A=X_0$, so assume that $Y_i\subseteq X_{2i}$ for some $i\in\mathbb N$ and consider the following cases for each $w \in Y_{i+1}\setminus Y_{i}$. 
\begin{enumerate}[label=(\alph*)]
\item If $w\notin\{x,y,v\}$, then by construction of $H$ we have $N_G(w)=N_{C}(w)$, and by our induction hypothesis we can conclude that at least $\rho d_G(w)=\rho d_{C}(w)$ neighbours of $w$ are in $Y_i\subseteq X_{2i}$. Hence $w\in X_{2i+1}\subseteq X_{2(i+1)}$ as required. 
\item If $w=v$, then $v\in V(C)$ and by construction of $H$ we have that $N_{C}(v)\subseteq N_G(v)$. Again since $v\in Y_{i+1}$, we have that $\rho d_{C}(v)\leq |N_{C}(v)\cap Y_i|$ and therefore by also making use of the induction hypothesis there exists $u\in (N_{C}(v)\cap Y_{i})\subseteq (N_G(v)\cap X_{2i})$. Since $d_G(v)\leq 1/\rho$, we have that $|N_G(v)\cap X_{2i}|\geq 1\geq \rho d_G(v)$ and therefore $v\in X_{2i+1}\subseteq X_{2(i+1)}$. 
\item If $w\in\{x,y\}$, then we may assume without loss of generality that $w=x$ (since the following arguments hold equally well with $y$ in place of $x$). Note that by construction of $H$ we have that $N_{C}(x)=(N_G(x)\setminus\{v\})\cup\{y\}$ and in particular that $d_G(x)=d_{C}(x)$. Furthermore, since $x\in Y_{i+1}\setminus Y_i$, we have that $|N_{C}(x)\cap Y_i|\geq \rho d_{C}(x)$. Setting $U:=N_{C}(x)\setminus\{y\}$, we first assume that $|U\cap Y_i|\geq \rho d_G(x)$. By the induction hypothesis $U\cap Y_i\subseteq X_{2i}$ and since $U\subseteq N_G(x)$ we have that $x\in X_{2i+1}\subseteq X_{2(i+1)}$. On the other hand, if $|U\cap Y_i|<\rho d_G(x)$, then $y\in Y_i\subseteq X_{2i}$. Since $d_G(v)\leq 1/\rho$ and $y\in N_G(v)\cap X_{2i}$, we have that $v\in X_{2i+1}$. Furthermore $v\notin U$ and $U\subseteq X_{2i}\subseteq X_{2i+1}$. Hence $|N_G(x)\cap X_{2i+1}|\geq |(U\cap Y_i)\cup \{v\}|\geq \rho d_{C}(x)=\rho d_G(x)$ and therefore $x\in X_{2(i+1)}$. 
\end{enumerate}
This completes the induction argument, so we have $Y_i \subseteq X_{2i}$ for each $i\in\mathbb N$, and therefore $V(C)=H_{\rho,C}(A)\subseteq H_{\rho,G}(A)$. This proves our claim that every contagious set $A \subseteq V(C)$ for $C$ also infects $V(C)$ in $G$.

Since $G$ is connected, the construction of $H$ ensures that $H$ has at most two connected components. Suppose first that $H$ is connected and therefore that $H=C$. Since we assumed that $G$ has no contagious set of size less than $2 \rho n$, it follows from our above claim that $C$ has no contagious set of size less than $2 \rho n$. 
Thus $C = H$ is also a counterexample, contradicting the edge-minimality of $G$. 
We conclude that $H$ must have exactly two connected components $C$ and $C'$.  
Observe that since $d_H(x)=d_G(x)> 1/\rho$ we have that $|V(C)|>1/(2\rho)$.
Suppose now that $|V(C')|\leq 1/(2\rho)$. Note that then every vertex $w \in V(C')$ has $d_G(w) \leq 1/\rho$. Indeed, if $w=v$ then this is implied by $v\in V_1$ and if $w\neq v$ then $w$ cannot have any neighbour in $C$, as otherwise $H$ would be connected, and therefore $|N_G(w)|\leq |C'|\leq 1/\rho$. Now let $A \subseteq V(C)$ be a contagious set for $C$. Then by our above claim $A$ infects $V(C)$ in $G$, so $V(G)\setminus H_{\rho,G}(A) \subseteq V(C')$. It follows that every vertex $w \in V(G)$ not infected by $A$ has $d_G(w) \leq 1/\rho$, and since $G$ is connected it follows that $H_{\rho,G}(A)=V(G)$. In other words any contagious set for $C$ is also a contagious set for $G$, and therefore $h_{\rho}(C)\geq 2\rho |V(C)|$. Thus $C$ is also a counterexample, contradicting the vertex-minimality of $G$. 

We therefore conclude that $C$ and $C'$ each have more than $1/(2\rho)$ vertices. By minimality of $G$ it follows there are contagious sets $A \subseteq V(C)$ and $A' \subseteq V(C')$ for $C$ and $C'$ respectively with $|A| < 2\rho |V(C)|$ and $|A'| < 2 \rho |V(C')|$. Then $A$ infects $V(C)$ in $G$ by our above claim. Moreover, $A'$ infects $V(C')$ in $G$ since every vertex $w \in V(C')$ with $w \neq v$ has $N_G(w) = N_{C'}(w)$, and $d_G(v) < 1/\rho$. We conclude that $A \cup A'$ is a contagious set in $G$ of size $|A \cup A'| \leq |A| + |A'| < 2\rho |V(C)| + 2 \rho |V(C')| = 2\rho n$, contradicting our assumption that $G$ was a counterexample and so completing the proof of~\ref{clm:clique}.

Finally, to prove~\ref{clm:degv2}, assume for a contradiction that there exists $v\in V_1$ such that $|N(v)\cap V_2|\geq 1/(2\rho)$, and choose any $w \in N(v)\cap V_2$. Since $G[N(v)\cap V_2]$ is a clique by~\ref{clm:clique}, every other vertex of $N(v) \cap V_2$ is a neighbour of both $v$ and $w$. Since each vertex of $V_2$ will become infected once it has two infected neighbours, it follows that if $w$ is initially infected, then $v$ will become infected at the first time step, and then all remaining vertices of $N(v) \cap V_2$ will become infected at the second time step. So $|H_{\rho, G}(w)|\geq |\{v\}\cup (N(v)\cap V_2)|\geq 1/(2\rho)$, contradicting Claim~\ref{clm:vertex}.
\end{proof}

We now introduce some further notation. 
Let $M$ be the number of edges with one endvertex in $V_1$ and one endvertex in $V_2$.
Also, for each $v \in V(G)$, write $d'(v) := |N(v)\cap V_1|$ and define $x_v := \rho d(v)$ and $y_v := \rho d'(v)$.
Note that, by Claim~\ref{clm:vertdeg}\ref{clm:degv1}, for every $v\in V_{\geq 2}$ we have
\begin{equation}
0\leq y_v< 1/2\;.\label{eq:yv}
\end{equation}
Furthermore, double-counting the edges between $V_1$ and $V_2$ together with Claim~\ref{clm:vertdeg}\ref{clm:degv2} gives us
\begin{equation}
\sum_{v\in V_2}y_v = \rho \sum_{v\in V_2} d'(v)= \rho M= \rho \sum_{v\in V_1}|N(v)\cap V_2| < \tfrac{1}{2}|V_1|\;.\label{eq:sumdegv1}
\end{equation}
We now choose an order $\sigma$ of the vertices of $V_{\geq 2}$ uniformly at random. Let $A$ be the set of vertices $v\in V_{\geq 2}$ for which fewer than $\lceil\rho d(v)\rceil$ neighbours of $v$ precede $v$ in the order $\sigma$. We claim that $A$ is a contagious set for $G$ with respect to $\rho$. Indeed, since $G$ is connected and each vertex of $V_1$ becomes infected once it has one infected neighbour, it suffices for this to show that $A$ infects $V_{\geq 2}$. Suppose for a contradiction that this is not the case, and let $v \in V_{\geq 2}$ be the first vertex in the order $\sigma$ which is not infected by $A$. Then $v \notin A$, so $v$ has at least  $\lceil\rho d(v)\rceil$ neighbours which precede $v$. Our choice of $v$ implies that all of these neighbours are infected by $A$, but this contradicts the fact that $v$ is not infected by $A$. So $A$ is indeed a contagious set for $G$ with respect to $\rho$. Moreover, for any $v \in V_{\geq 2}$ we have $v \in A$ if and only if $v$ is one of the first $\lceil\rho d(v)\rceil$ members of $\{v\} \cup (N(v) \cap V_{\geq 2})$ in the order $\sigma$. It follows that $\mathbb P[v\in A]=\frac{\lceil\rho d(v)\rceil}{1+|N(v)\cap V_{\geq 2}|}$, so the expected size of $A$ is
\begin{align*}
\Expect[|A|]&=\sum_{v\in V_{\geq 2}}\frac{\lceil\rho d(v)\rceil}{1+d(v)-d'(v)} < \sum_{v\in V_{\geq 2}}\frac{1+\rho d(v)}{d(v)-d'(v)}
= \rho\sum_{v\in V_{\geq 2}}\frac{1+x_v}{x_v-y_v}\\
&= \rho\left(\sum_{v\in V_2}\frac{1+x_v}{x_v-y_v}+\sum_{v\in V_{\geq 2}\setminus V_2}\frac{1+x_v}{x_v-y_v}\right) \\
&\overset{(*)}{\leq}\rho\left(\sum_{v\in V_2}\frac{1+1}{1-y_v}+\sum_{v\in V_{\geq 2}\setminus V_2}\frac{1+2}{2-1/2}\right)\\
&= 2\rho\left(\sum_{v\in V_2}\frac{1}{1-y_v}+|V_{\geq 2}\setminus V_2|\right)=:T\;,
\end{align*}
where the inequality $(*)$ follows from~\eqref{eq:yv}, the fact that $x_v \geq 1$ for $v \in V_2$ and $x_v \geq 2$ for $x \in V_{\geq 2} \sm V_2$, and the fact that if $0\leq y<a\leq x$ are real numbers, then $\frac{1+x}{x-y}\leq\frac{1+a}{a-y}$. For the final argument we will use the following claim.

\begin{claim}\label{clm:max}
Let $n_1,n_2\in \mathbb N$, $K=\{y\in [0,\frac{1}{2}]^{n_2}\mid \sum_{i=1}^{n_2} y_i\leq\frac{n_1}{2}\}$ and $f(y)=\sum_{i=1}^{n_2} \frac{1}{1-y_i}$. We have that
$$\max_{y\in K} f(y)\leq n_1+n_2\;.$$
\end{claim}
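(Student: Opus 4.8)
The plan is to bound each summand of $f$ by an affine function of the corresponding variable and then invoke the single constraint $\sum_i y_i \le n_1/2$. The key elementary observation is that
\[\frac{1}{1-y} \le 1 + 2y \qquad \text{for every } y \in \left[0, \tfrac12\right].\]
This is immediate after clearing denominators: it is equivalent to $1 \le (1+2y)(1-y) = 1 + y - 2y^2$, i.e.\ to $0 \le y(1-2y)$, which clearly holds on $[0,\tfrac12]$. Conceptually, $y \mapsto \frac{1}{1-y}$ is convex, so on $[0,\tfrac12]$ it lies below the chord joining $(0,1)$ and $(\tfrac12, 2)$, and that chord is exactly $y \mapsto 1 + 2y$; this is why the bound is tight at the two endpoints that appear in the definition of $K$.

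Granting this, the claim follows in one line. For any $y \in K$ we have $y_i \in [0,\tfrac12]$ for each $i$, so
\[f(y) = \sum_{i=1}^{n_2} \frac{1}{1-y_i} \le \sum_{i=1}^{n_2}(1 + 2y_i) = n_2 + 2\sum_{i=1}^{n_2} y_i \le n_2 + 2\cdot\frac{n_1}{2} = n_1 + n_2,\]
where the final inequality uses the defining constraint $\sum_{i=1}^{n_2} y_i \le n_1/2$ of $K$. Taking the maximum over $y \in K$ gives $\max_{y\in K} f(y) \le n_1 + n_2$, as required.

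There is essentially no obstacle in this argument; the only step requiring a moment's thought is producing the right affine majorant of $\frac{1}{1-y}$, and the endpoints $0$ and $\tfrac12$ appearing in the hypotheses make the chord bound the natural choice. Should one wish to avoid guessing the inequality, an alternative is to note that $f$ is convex and continuous on the compact convex polytope $K$, hence attains its maximum at a vertex of $K$; since $n_1$ is an integer, one checks that every vertex of $K$ has all coordinates in $\{0,\tfrac12\}$ with at most $n_1$ of them equal to $\tfrac12$, so that $f$ equals $n_2 + \#\{i : y_i = \tfrac12\} \le n_1 + n_2$ there. I would present the chord argument, as it is shorter and self-contained.
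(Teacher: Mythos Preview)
Your proof is correct and in fact cleaner than the paper's. The paper argues by locating the maximiser: it splits into the cases $n_1 \ge n_2$ (where the point $(\tfrac12,\dots,\tfrac12)$ lies in $K$ and is clearly optimal) and $n_1 < n_2$, and in the latter case uses an exchange argument based on the convexity of $g(x)=\frac{1}{1-x}$ to show that an optimal $u$ must have every coordinate in $\{0,\tfrac12\}$, whence $f(u)=2n_1+(n_2-n_1)=n_1+n_2$. Your route bypasses all of this by using convexity in the opposite direction: rather than pushing mass to the endpoints to find the maximiser, you bound the convex function $\frac{1}{1-y}$ above by its chord $1+2y$ on $[0,\tfrac12]$ and then sum. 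This trades an optimisation argument for a single pointwise inequality and avoids the case distinction entirely; the price is that you do not identify the extremal configuration, but the claim only asks for the upper bound. Your closing remark about vertices of $K$ is essentially the paper's approach in disguise (and your observation that the integrality of $n_1$ forces all vertex coordinates into $\{0,\tfrac12\}$ matches the paper's final step), so you have effectively found both arguments.
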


\begin{proof}
Suppose first that $n_1 \geq n_2$, and let $u := (\frac{1}{2}, \dots, \frac{1}{2})$. Then $u \in K$, and it is easy to see that $f(u)\geq f(y)$ for every $y\in K$. Furthermore $f(u)=n_2\cdot\frac{1}{1-1/2}=2n_2\leq n_1+n_2$.

Now suppose instead that $n_1 < n_2$. Since $f$ is a continuous function it attains its maximum on the compact space $K$. Fix $u \in K$ such that this maximum is attained, and (subject to this) so that as many as possible of the $u_i$ are equal to $1/2$. Clearly we then have $\sum_{i=1}^{n_2} u_i = \frac{n_1}{2}$. Suppose first that there exist distinct $i, j \in [n_2]$ such that $0 < u_i \leq u_j < 1/2$. Then set $\eps:= \min\{u_i,1/2-u_j\}$ and define $z\in \mathbb R^{n_2}$ by 
$$z_k:=\begin{cases}u_i-\eps,\textrm{ if }k=i\\u_j+\eps,\textrm{ if }k=j\\u_k,\textrm{ otherwise}\end{cases}\;.$$
Note that $z\in K$. We then get by the convexity of the function $g(x) = \frac{1}{1-x}$ on $[0,\frac{1}{2}]$ that
$$f(u)= g(u_i)+g(u_j)+ \sum_{k\neq i,j} g(u_k) < g(u_i-\eps)+g(u_j+\eps) +\sum_{k\neq i,j}g(u_k)=f(z)\;.$$
This contradicts the assumption that $f(u)$ was the maximal value. We therefore conclude that all but at most one of the $u_i$ are equal to $0$ or $1/2$; since $n_1$ is an integer and $\sum_{i=1}^{n_2} u_i = \frac{n_1}{2}$, this implies that in fact every $u_i$ is equal to $0$ or $1/2$ (in other words $u$ has $n_1$ coordinates equal to $1/2$ and the rest equal to $0$). 
Hence $\max_{y\in K} f(y) = f(u) = n_1\cdot \frac{1}{1-1/2}+(n_2-n_1)\cdot \frac{1}{1-0}=n_1+n_2$.
\end{proof}

By setting $n_1=|V_1|$ and $n_2=|V_2|$ and by using~\eqref{eq:yv} and~\eqref{eq:sumdegv1} the assumptions of Claim~\ref{clm:max} are fulfilled and we can conclude our calculation with
\begin{align*}
\Expect[|A|]< T&\leq2\rho(|V_1|+|V_2|+|V_{\geq 2}\setminus V_2|)\\
&= 2\rho n\;.
\end{align*}
We may therefore fix an order $\sigma$ of $V_{\geq 2}$ such that the contagious set $A$ given by this order has size less than $2 \rho n$. This contradicts our assumption that $G$ was a counterexample and so proves the theorem.
\end{proof}

\subsection{Proof of Theorem~\ref{thm:girthbound}.}

To prove our second main theorem we use the following modified version of a theorem of Gentner and Rautenbach~\cite{GeRa17}. For a constant $\rho>0$ and a graph $G$ we again use the notation $V_1=\{v\in V(G)\mid d(v)\leq 1/\rho\}$ and $V_{\geq 2}=V\setminus V_1$.

\begin{lemma}[\cite{GeRa17}]\label{lem:girth}
For every $\eps>0$ there exists $\rho_0>0$ such that for any $\rho \in (0, \rho_0)$ the following holds. Let $G$ be a connected graph of order $n$ with girth at least $5$ and $\Delta(G)\geq 1/\rho$. If $|N(u)\cap V_1|<\frac{1}{1+\eps}d(u)$ for every $u\in V_{\geq 2}$, then 
\[h_{\rho}(G) < (1+\eps)\rho n\;.\]
\end{lemma}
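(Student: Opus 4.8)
The plan is to prove Lemma~\ref{lem:girth} directly, adapting the random-ordering argument from the proof of Theorem~\ref{thm:genbound} but exploiting the girth condition to show that the low-degree neighbourhoods of vertices in $V_{\geq 2}$ are very sparse. As before, the reduction step shows that we may assume $n > 1/\rho + 1$ and that every vertex of $V_1$ becomes infected as soon as it has one infected neighbour. The core construction is the same: choose an order $\sigma$ of $V_{\geq 2}$ uniformly at random, and let $A$ be the set of $v \in V_{\geq 2}$ for which fewer than $\lceil \rho d(v)\rceil$ neighbours of $v$ precede $v$ in $\sigma$. The identical argument as in Theorem~\ref{thm:genbound} shows $A$ is a contagious set (connectivity plus the fact that $V_1$-vertices need only one infected neighbour), and that $\Prob[v \in A] = \frac{\lceil \rho d(v)\rceil}{1 + |N(v) \cap V_{\geq 2}|}$. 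So $\Expect[|A|] = \sum_{v \in V_{\geq 2}} \frac{\lceil \rho d(v)\rceil}{1 + d(v) - |N(v)\cap V_1|}$, and it suffices to bound this sum by $(1+\eps)\rho n$.

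The first main step is the easy bound on the summand: for $v \in V_{\geq 2}$, using $|N(v) \cap V_1| < \frac{1}{1+\eps} d(v)$ and $d(v) \geq 1/\rho$ (so $\lceil \rho d(v)\rceil < \rho d(v) + 1 \leq \rho d(v)(1 + \rho)$, or more crudely $\lceil \rho d(v)\rceil \leq \rho d(v) + 1$), we get
\[
\frac{\lceil \rho d(v)\rceil}{1 + d(v) - |N(v)\cap V_1|} < \frac{\rho d(v) + 1}{d(v)\left(1 - \tfrac{1}{1+\eps}\right)} = \frac{1+\eps}{\eps}\left(\rho + \frac{1}{d(v)}\right) \leq \frac{1+\eps}{\eps}\,\rho\,(1 + \rho)\;,
\]
which is at most $(1 + \eps')\rho$ for any prescribed $\eps' > 0$ once $\rho_0$ is small enough — but this only gives $\Expect[|A|] < (1+\eps')\rho|V_{\geq 2}|$, which is far too weak since $\eps$ is fixed and $1/\eps$ is large; the factor $(1+\eps)/\eps$ is a genuine loss. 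So this crude approach fails, and the real content must come from the girth-$5$ hypothesis, which I have not yet used. The point of girth at least $5$ is that any two vertices of $V_1$ have at most one common neighbour, and (more usefully) the neighbourhoods $N(v) \cap V_1$ for distinct $v$ overlap in at most one vertex; in fact each vertex of $V_1$ contributes to at most a bounded number of such neighbourhoods in a controlled way. The sharper idea of Gentner and Rautenbach is to build the initial infected set greedily among $V_{\geq 2}$ but to charge the cost of each chosen vertex against the large set $N(v) \cap V_1$ of low-degree vertices it then infects for free, and the girth condition guarantees these charged sets are essentially disjoint.

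Concretely, the second and main step is a more careful deterministic or semi-random selection. One clean route: process the vertices of $V_{\geq 2}$ in a random order $\sigma$ as above, but now estimate the contribution more tightly by splitting $V_{\geq 2}$ according to whether $|N(v) \cap V_1|$ is large (say $\geq \delta d(v)$ for a suitable $\delta = \delta(\eps)$) or small. For $v$ with small $V_1$-neighbourhood, $1 + d(v) - |N(v)\cap V_1| \geq (1-\delta)d(v)$, so the summand is at most roughly $\frac{\rho d(v) + 1}{(1-\delta) d(v)} \approx \frac{\rho}{1-\delta}$, contributing about $\frac{\rho}{1-\delta}|V_{\geq 2}^{\mathrm{small}}|$. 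For $v$ with large $V_1$-neighbourhood, $v$ infects at least $\delta d(v) \geq \delta/\rho$ vertices of $V_1$, and the girth-$5$ condition forces these sets $N(v) \cap V_1$ (over all such $v$) to be pairwise almost disjoint (overlaps of size at most $1$), so the number of such $v$ is at most about $\rho |V_1|/\delta \cdot (1 + o(1))$, hence their total contribution to $\Expect[|A|]$ is at most $\frac{1+\eps}{\eps}\rho \cdot \frac{\rho|V_1|}{\delta}(1+o(1))$, which — since $\rho$ is tiny — is at most $\eps' \rho n$. Combining, $\Expect[|A|] \leq \frac{\rho}{1-\delta}|V_{\geq 2}| + \eps' \rho n + (\text{cost of } V_1\text{, which is } 0) \leq (1+\eps)\rho n$ for appropriate choice of $\delta$ and $\rho_0$. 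The main obstacle — and the place where the girth hypothesis must be used quantitatively — is making the ``pairwise almost disjoint'' claim precise and verifying that the small-$V_1$-neighbourhood vertices really do pay only $\frac{\rho}{1-\delta}$ each; in particular one must check that a $V_1$-vertex is not over-counted when it lies in $N(v)\cap V_1$ for several $v \in V_{\geq 2}^{\mathrm{large}}$, which is exactly where girth at least $5$ (no $C_4$, no $C_3$) bounds the codegree of any two $V_{\geq 2}$-vertices within $V_1$ by $1$. Since this lemma is attributed to Gentner and Rautenbach, I would follow their argument for this combinatorial heart and merely reorganise the averaging to land at the clean bound $(1+\eps)\rho n$.
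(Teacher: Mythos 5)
There is a genuine gap in your quantitative plan, and it sits exactly where the girth hypothesis has to do its real work. In your ``small $V_1$-neighbourhood'' case you approximate the summand by $\frac{\rho d(v)+1}{(1-\delta)d(v)}\approx\frac{\rho}{1-\delta}$, but the error term $\frac{1}{d(v)}$ is \emph{not} negligible compared to $\rho$ for the vertices that matter: every $v\in V_{\geq 2}$ with $d(v)$ only slightly above $1/\rho$ has $\lceil\rho d(v)\rceil=2$ and hence $\Prob[v\in A]=\frac{\lceil\rho d(v)\rceil}{1+|N(v)\cap V_{\geq 2}|}$ close to $2\rho$, not $\rho$. Concretely, take $G$ to be a $(\lfloor 1/\rho\rfloor+1)$-regular graph of girth $5$: then $V_1=\emptyset$, the hypothesis $|N(u)\cap V_1|<\frac{1}{1+\eps}d(u)$ holds vacuously, and your random-order construction has $\Expect[|A|]\approx 2\rho n$, so no first-moment bound on that construction can deliver $(1+\eps)\rho n$. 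In other words, the factor-$2$ loss is the integrality/ceiling loss for \emph{all} vertices of degree $\Theta(1/\rho)$ (the same loss that makes $2\rho n$ optimal in general, cf.\ Observation~\ref{obs:example}), and it is not repaired by your use of girth, which you only invoke to bound codegrees into $V_1$ and thereby control the vertices with large $V_1$-neighbourhoods --- a different, non-critical issue. The girth condition has to be used to infect the low-degree vertices of $V_{\geq 2}$ more cheaply than the ordering argument does, which requires a genuinely different (multi-round, locally tree-like) random selection.

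Your closing remark --- that you would follow Gentner and Rautenbach for the combinatorial heart --- is in fact how the paper proceeds, but the reduction is not a ``reorganisation of the averaging'' in your argument. The paper observes that the proof of \cite[Theorem~1]{GeRa17} has two phases: first one constructs a set $X_0\subseteq V_{\geq 2}$ with (a) $|X_0|\leq(1+\delta)\rho n$ and (b) $|N(u)\cap(V_1\setminus H_{\rho,G}(X_0))|\leq\frac{1}{1+\delta}d(u)$ for all $u\in V_{\geq 2}\setminus X_0$, and then a sequence of girth-exploiting random selections adds at most $(1+\eps-\delta)\rho n$ further vertices to obtain a contagious set. The extra hypothesis of Lemma~\ref{lem:girth} says precisely that $X_0:=\emptyset$ already satisfies (a) and (b), so rerunning only the second phase yields a contagious set of size at most $(1+\eps-\delta)\rho n<(1+\eps)\rho n$. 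If you want a self-contained proof you would have to reproduce that second phase; as written, your own argument caps out at roughly $2\rho n$.
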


The form of the lemma given above follows immediately from the proof of the original version~\cite[Theorem~1]{GeRa17}, which omitted the condition on the neighbourhood in $V_1$ of vertices in $V_{\geq 2}$, and had the weaker conclusion that $h_{\rho}(G)\leq (2+\eps)\rho n$.
Loosely speaking, in this proof Gentner and Rautenbach used a random selection process to select an initial set $X$ and showed that vertices satisfying certain properties are infected with high probability by $X$. They then bounded the number of vertices which do not have these properties, and simply added all such vertices to $X$ to obtain a contagious set of the claimed size.

More precisely, they first fixed a sufficiently small constant $\delta$, in particular with $\delta<\eps$, and found a set $X_0\subseteq V_{\geq 2}$ with two properties:
\begin{enumerate}[label=(\alph*)]
\item  $|X_0|\leq(1+\delta)\rho n$, and
\item $|N(u)\cap (V_1\setminus H_{\rho,G}(X_0))|\leq\frac{1}{1+\delta}d(u)$ for every $u\in V_{\geq 2}\setminus X_0$.
\end{enumerate}
Then, by a series of random selections which exploit the girth property to ensure independence of the random variables considered, they added at most $(1+\eps-\delta)\rho n$ vertices to $X_0$ to gain a set $X_0\subseteq Y\subseteq V(G)$ such that $|Y|\leq(2+\eps)n$ and $H_{\rho,G}(Y)=V(G)$. 

To prove Lemma~\ref{lem:girth} we follow exactly the same argument with $X_0 := \emptyset$; the additional condition of Lemma~\ref{lem:girth} ensures that $X_0$ satisfies~(a) and~(b). We then obtain a set $Y\subseteq V(G)$ such that $|Y| \leq (1+\eps-\delta)\rho n < (1+\eps) \rho n$ and $H_{\rho,G}(Y)=V(G)$. 

We can now prove Theorem~\ref{thm:girthbound}. The main idea is to show that a minimal counterexample must satisfy the conditions of Lemma~\ref{lem:girth}, and therefore is not in fact a counterexample.

\begin{proof}[Proof of Theorem~\ref{thm:girthbound}] 
Given $\eps >0$, let $\rho_0$ be small enough for Lemma~\ref{lem:girth} and also such that $\rho_0 <\frac{\eps}{1+\eps}$, and fix $\rho \in (0, \rho_0)$. 
We will show that if $G$ is a connected graph of order $n > \frac{1}{(1+\eps)\rho}$ with girth at least $5$, then $h_{\rho}(G)<(1+\eps) \rho n$. This suffices to prove Theorem~\ref{thm:girthbound} as, by a similar argument as in the proof of Theorem~\ref{thm:genbound}, every connected graph $G$ of order at most $\frac{1}{(1+\eps)\rho}$ has $h_{\rho}(G)=1$. 
So suppose that this assertion is false, let $G$ be a minimal counterexample according to the number of vertices (for these values of $\rho$ and $\eps$), and let $n$ be the order of $G$. Then $G$ is a connected graph on $n > \frac{1}{(1+\eps)\rho}$ vertices with girth at least five and $h_\rho(G) \geq (1+\eps) \rho n$. We may assume that $\Delta(G)> 1/\rho$, as otherwise (since $G$ is connected) we have $h_{\rho}(G)=1<(1+\eps)\rho n$, a contradiction. We show the following claim.
\begin{claim} \label{clm:girthdeg}
$|N(v)\cap V_1|< \frac{1}{(1+\eps)\rho}$ for every $v\in V_{\geq 2}$. 
\end{claim}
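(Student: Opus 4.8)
This should mirror Claim~\ref{clm:vertex} from the proof of Theorem~\ref{thm:genbound}. The goal is to show that a vertex $v$ with many $V_1$-neighbours would let us strip off a large infected ball by hand and appeal to the vertex-minimality of~$G$. So suppose for contradiction that some $v \in V_{\geq 2}$ has $|N(v) \cap V_1| \geq \frac{1}{(1+\eps)\rho}$. I would set $S := \{v\} \cup (N(v) \cap V_1)$, so that $\{v\}$ by itself infects all of $S$ (every vertex of $V_1$ needs only one infected neighbour, and each is adjacent to~$v$). Note $|S| \geq 1 + \frac{1}{(1+\eps)\rho} > \frac{1}{(1+\eps)\rho}$. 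Then look at $G' := G - S$ and its connected components $\mathcal{C}$.

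First I would argue $\mathcal{C}$ is nonempty: if $\{v\}$ infected all of $G$ then $h_\rho(G) = 1 < (1+\eps)\rho n$, contradicting that $G$ is a counterexample (here I use $n > \frac{1}{(1+\eps)\rho}$). Next, the key structural point: I want every component $C \in \mathcal{C}$ to have $|V(C)| > \frac{1}{(1+\eps)\rho}$, so that minimality of $G$ gives each a contagious set $A_C$ with $|A_C| < (1+\eps)\rho |V(C)|$. The worry is a small component $C$ with $|V(C)| \leq \frac{1}{(1+\eps)\rho}$. Unlike in Claim~\ref{clm:vertex} the degree bookkeeping is a little different, and here is where the girth-five hypothesis should enter: because $G$ has girth at least five, two distinct vertices of $N(v) \cap V_1$ have no common neighbour other than $v$, and more usefully a vertex $u \in V(C)$ can be adjacent to at most one vertex of $N(v)\cap V_1$ (else $u$, that pair, and $v$ would form a $4$-cycle). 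So if $u \in V(C)$ has a neighbour in $S$, then $u$ is adjacent to $v$ itself, or to exactly one vertex of $N(v)\cap V_1$; in either case $|N_G(u) \cap S| \leq 1$ unless $u$ is adjacent to both $v$ and one $V_1$-vertex — but $v$ together with that $V_1$-vertex and $u$ would be a triangle, again forbidden. Hence $|N_G(u)\cap S| \leq 1$ for every $u \in V(C)$. Now if $|V(C)| \leq \frac{1}{(1+\eps)\rho}$ then $d_G(u) \leq |V(C)| + 1 \leq \frac{1}{(1+\eps)\rho} + 1$, which for $\rho$ small (using $\rho_0 < \frac{\eps}{1+\eps}$) is at most $1/\rho$, so $u \in V_1$; since $G$ is connected this would mean $\{v\}$ already infects $V(C)$ through its one neighbour there, contradicting $C$ being a component of $G - H_{\rho,G}(v)$ — wait, more carefully: it shows every uninfected vertex left by a contagious-set-of-$C$ argument lies in a $V_1$-like tail and gets swept up, exactly as in Claim~\ref{clm:clique}. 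I would phrase it the cleaner way: a small component $C$ consists entirely of vertices of degree $\leq 1/\rho$, so once any neighbour is infected the whole of $C$ is infected; since $v$ infects $S$ and $S$ has a neighbour in $C$, $C \subseteq H_{\rho,G}(v)$, contradicting $C \in \mathcal{C}$.

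Given that every component is large, minimality yields contagious sets $A_C \subseteq V(C)$ with $|A_C| < (1+\eps)\rho|V(C)|$. By Proposition~\ref{prop:activate} applied with the set $S$ (and using that $\bigcup_C A_C$ is contagious for $G' = G-S$), the set $\bigcup_{C \in \mathcal{C}} A_C \cup \{v\}$ is contagious for $G$ — because $\{v\}$ infects $S$, then Proposition~\ref{prop:activate} finishes the job. Hence
\[
h_\rho(G) \leq 1 + \sum_{C \in \mathcal{C}} |A_C| < (1+\eps)\rho\Bigl(|S| + \sum_{C} |V(C)|\Bigr) = (1+\eps)\rho n,
\]
using $1 < (1+\eps)\rho|S|$, which contradicts $G$ being a counterexample. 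This proves the claim.

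**Main obstacle.** The delicate part is not the minimality argument (that is a direct transplant of Claim~\ref{clm:vertex}/Claim~\ref{clm:clique}) but pinning down exactly why a small component is harmless, and in particular getting the bookkeeping so that $1 < (1+\eps)\rho|S|$ — this needs $|N(v)\cap V_1| \geq \frac{1}{(1+\eps)\rho}$ to translate into $|S| = 1 + |N(v)\cap V_1| > \frac{1}{(1+\eps)\rho}$, which is immediate, so in fact the constant works out cleanly. The one genuine subtlety is making sure the girth hypothesis is actually needed/used for the degree count on vertices of a small component; if it turns out a direct degree inequality (as in Claim~\ref{clm:vertex}) suffices without girth, so much the better, but I would expect to invoke girth at least once to control common neighbourhoods of $S$. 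I would double-check that after this claim the remaining proof simply verifies the hypothesis of Lemma~\ref{lem:girth}: $|N(v)\cap V_1| < \frac{1}{(1+\eps)\rho} = \frac{1}{1+\eps}\cdot\frac{1}{\rho} \leq \frac{1}{1+\eps} d(v)$ for $v \in V_{\geq 2}$ (since $d(v) > 1/\rho$), so Lemma~\ref{lem:girth} gives $h_\rho(G) < (1+\eps)\rho n$, contradicting the counterexample and completing the proof of Theorem~\ref{thm:girthbound}.
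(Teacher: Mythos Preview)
Your overall strategy matches the paper's --- strip off what $\{v\}$ infects, apply vertex-minimality to the remaining components, and reassemble --- but there is a genuine slip in the small-component step, and your detour through the girth hypothesis is unnecessary.

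The slip: you set $G' = G - S$ with $S = \{v\} \cup (N(v)\cap V_1)$ and let $\mathcal{C}$ be the components of $G'$. For a small component $C$ you correctly deduce $C \subseteq H_{\rho,G}(v)$ and then claim this contradicts $C \in \mathcal{C}$. It does not: $\mathcal{C}$ consists of components of $G - S$, not of $G - H_{\rho,G}(v)$, and $S$ may be a proper subset of $H_{\rho,G}(v)$. So you have not ruled out small components. The fix is easy --- either observe that small components cost nothing in the final count (they are infected by $\{v\}$, so $1 + \sum_{C \text{ large}} |A_C| < (1+\eps)\rho\bigl(|S| + \sum_{C \text{ large}} |V(C)|\bigr) \leq (1+\eps)\rho n$ still works) or, better, remove $H_{\rho,G}(v)$ rather than $S$ from the outset. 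The latter is exactly what the paper does, and is also what Claim~\ref{clm:vertex}, which you set out to mirror, does.

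Once you remove $H_{\rho,G}(v)$, your suspicion that girth might be unnecessary here is correct. If $C$ is a component of $G - H_{\rho,G}(v)$ with $|V(C)| \leq \frac{1}{(1+\eps)\rho}$, pick $u \in V(C)$ with a neighbour in $H_{\rho,G}(v)$ and set $m(u) := |N(u) \cap H_{\rho,G}(v)| \geq 1$. Since $u \notin H_{\rho,G}(v)$, the infection rule forces $m(u) < \rho d(u)$, i.e.\ $d(u) > m(u)/\rho$; on the other hand $d(u) < |V(C)| + m(u) \leq \frac{1}{(1+\eps)\rho} + m(u)$. Combining gives $\frac{1}{1+\eps} > (1-\rho)m(u) \geq 1 - \rho$, contradicting $\rho < \rho_0 < \frac{\eps}{1+\eps}$. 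Thus the girth-five hypothesis plays no role in this claim; it enters the proof of Theorem~\ref{thm:girthbound} only through Lemma~\ref{lem:girth}.
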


\begin{proof}
Suppose for a contradiction that there exists $v\in V_{\geq 2}$ such that $|N(v)\cap V_1|\geq \frac{1}{(1+\eps)\rho}$. Then $|H_{\rho, G}(v)|\geq |\{v\}\cup (N(v)\cap V_1)|> \frac{1}{(1+\eps)\rho}$. We set $G':=G-H_{\rho, G}(v)$ and denote by $\mathcal{C}$ the set of connected components of $G'$. 
Note first that $\{v\}$ cannot infect $V(G)$, as otherwise $h_{\rho}(G)=1<(1+\eps)\rho n$, a contradiction. Now assume that there is a component $C\in\mathcal{C}$ which contains at most $\frac{1}{(1+\eps)\rho}$ vertices. As $G$ is connected there exists a vertex $u\in V(C)$ with a neighbour in $H_{\rho,G}(v)$. Let $m(u):=|N(u)\cap H_{\rho,G}(v)|\geq 1$; since $u\notin H_{\rho, G}(v)$, we derive the inequality
\[\frac{1}{(1+\eps)\rho}+m(u) > d(u)>  m(u) \cdot \frac{1}{\rho}\;.\]
This leads to 
$$1 - \frac{\eps}{1+\eps} = \frac{1}{1+\eps}> (1-\rho) m(u)\geq 1-\rho \geq 1-\rho_0\;,$$
which contradicts our choice of $\rho_0$. Therefore each connected component of $G'$ contains more than $\frac{1}{(1+\eps)\rho}$ vertices. Since $G$ was a minimal counterexample, it follows that each component $C \in\mathcal{C}$ has $h_\rho(C) < (1+\eps)\rho |V(C)|$. By Proposition~\ref{prop:activate} we then have
$$h_{\rho}(G)\leq1+\sum_{C\in\mathcal C}h_{\rho}(C)<(1+\eps)\rho\left(|H_{\rho, G}(v)|+\sum_{C\in\mathcal C} |V(C)|\right) = (1+\eps)\rho n\;,$$
a contradiction. 
\end{proof}

So for every vertex $u\in V_{\geq 2}$ we have $|N(u)\cap V_1|<\frac{1}{(1+\eps)\rho}<\frac{1}{1+\eps}d(u)$. By Lemma~\ref{lem:girth}, it follows that $h_\rho(G) < (1+\eps) \rho n$. This contradicts our assumption that $G$ was a counterexample, and so completes the proof.
\end{proof}

\section{Further Remarks}\label{sec:remarks}

The example of the balanced $(\lfloor1/\rho\rfloor+1)$-regular tree in Observation~\ref{obs:exampleTree} shows that Theorem~\ref{thm:girthbound} is asymptotically best possible; there remains the question of whether it is possible to drop the $\eps$ in the statement of Theorem~\ref{thm:girthbound}. Note that the assumption about $\rho$ being sufficiently small is necessary for Theorem~\ref{thm:girthbound}. To see this, choose any $\rho\in (1/2,3/5)$, and observe that the $5$-cycle $C_5$ then fulfils $h_{\rho}(C_5)=3 >\rho n(C_5)$. By contrast Gentner and Rautenbach~\cite{GeRa17} showed that for every $\rho\in (0,1]$ and every tree $T$ of order at least $1/\rho$ we have
$h_{\rho}(T)\leq\rho n(T)\;.$
It would also be interesting to know whether Theorem~\ref{thm:girthbound} remains valid under the weaker assumption that $G$ has girth at least four (note that Observation~\ref{obs:example} shows that the girth assumption cannot be dropped altogether).

Recall that the example of the complete graph $K_{\lfloor 1/\rho \rfloor+2}$ shows that the constant 2 in Theorem~\ref{thm:genbound} cannot be improved. However, since that example has order roughly $1/\rho$, there remains the question of whether, for fixed $\rho$, the optimal bound for graphs on at least~$n$ vertices is asymptotically $\rho n$ as $n$ tends to infinity. The following construction shows that this is not the case for uncountably-many values of $\rho$, including values arbitrarily close to zero.

\begin{prop}\label{prop:example}
For every $\rho\in (0,1/3)$ such that $1/\rho-\lfloor1/\rho\rfloor>1/2$ there exists $\eps(\rho)>0$ such that for every $N_0\in\mathbb N$ there exists a connected graph $G$ of order $n\geq N_0$ with
$$h_{\rho}(G)> (1+\eps)\rho n\;.$$
\end{prop}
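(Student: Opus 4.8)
The plan is to build, for each such $\rho$, a graph in which many vertices behave like the "tight" vertices of the complete-graph example, but whose order is unbounded, so that the $\Theta(\rho n)$ lower bound from the clique is inherited by a constant fraction of the graph rather than by the whole of it. Concretely, set $d := \lfloor 1/\rho\rfloor + 1$, so that a vertex of degree $d$ needs $\lceil \rho d\rceil = 1$ infected neighbour, but — and this is where the hypothesis $1/\rho - \lfloor 1/\rho\rfloor > 1/2$ is used — a vertex of degree $d+1$ has $\rho(d+1) = \rho d + \rho > 1 + \tfrac12\rho\cdot\!$ (roughly), and more importantly a vertex whose degree is a suitable multiple of $d$, say degree $kd$ for small $k$, needs $\lceil \rho k d\rceil = k+1$ infected neighbours because $\rho k d = k(1/\rho - \lfloor 1/\rho\rfloor)/(1/\rho) \cdot\!\ldots$ exceeds $k$; the condition $1/\rho-\lfloor1/\rho\rfloor > 1/2$ guarantees $\rho k d > k$ already for $k=1$, i.e.\ that degree-$d$ vertices adjacent in a certain configuration still require two hits. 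I would first pin down precisely, as a short arithmetic lemma, the threshold $\lceil \rho k d\rceil$ for the handful of values of $k$ I actually use.

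Next I would describe the construction. Take a large number $t$ of disjoint copies of the extremal clique-like gadget $Q$ on $d+1 = \lfloor1/\rho\rfloor + 2$ vertices (for which every contagious set has size at least $2$), and link them together into a single connected graph by a sparse connective structure — for instance, designate one vertex in each copy and join these $t$ designated vertices along a path, or attach all copies to a common low-degree "spine", taking care that the added edges do not lower the infection thresholds inside the gadgets (this is exactly what the parity hypothesis $1/\rho-\lfloor1/\rho\rfloor>1/2$ buys: there is enough slack that one extra incident edge does not turn a "needs $2$" vertex into a "needs $1$" vertex). The resulting graph $G$ has order $n = t(d+1) + O(t) = \Theta(t)$, and since each of the $t$ clique-copies must receive at least $2$ vertices of any contagious set (an internal argument identical to the one in Observation~\ref{obs:exampleTree}, using that the copies are "almost" isolated), we get $h_\rho(G) \geq 2t$. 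Then $h_\rho(G) \geq 2t = \frac{2t}{\rho n}\,\rho n \geq \frac{2}{\rho(d+1) + c}\,\rho n$ for the appropriate constant $c$ coming from the connective edges, and since $d+1 = \lfloor 1/\rho\rfloor + 2 < 1/\rho + 2$ we have $\rho(d+1) < 1 + 2\rho < 1 + 2/3$, so $\frac{2}{\rho(d+1)+O(\rho)} > 1 + \eps(\rho)$ for some $\eps(\rho) > 0$ depending only on $\rho$ (here $\rho < 1/3$ is what keeps $\rho(d+1)$ bounded away from $2$). Choosing $t$ large makes $n \geq N_0$, which is all that is required.

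The main obstacle is the connectivity surgery: I need the $t$ gadgets to be genuinely connected into one graph while (i) not decreasing any internal infection threshold and (ii) not adding so many extra vertices or edges that $\rho n$ grows by more than a $(1+\eps)$-compatible amount, and (iii) still being able to prove the "$\geq 2$ per gadget" lower bound rigorously — the danger is that infection could leak from one gadget into another along the connective edges and thereby let a contagious set be cheaper. I would handle (i)–(ii) by making each connecting edge incident, on the gadget side, to a vertex whose degree is pushed from $d$ (threshold $1$) up to at most $d + O(1)$ with threshold still $1$ — so these vertices are harmless — and I would handle (iii) by observing that any vertex of a gadget other than its single "port" vertex has all its neighbours inside the gadget, so to infect such a vertex one needs $2$ infected neighbours already inside that gadget's vertex set (including possibly the port and the intruding edge-endpoint, whose contribution I must carefully bound); a clean way is to choose the gadget so that even granting one "free" externally-supplied infection to the port, the rest of the gadget still needs at least one more seed, giving the bound $\geq 2$ per gadget, or if that fails, to replace "$2$" by "$2 - o(1)$ on average" which is still enough for a fixed $\eps(\rho) > 0$ once $\rho < 1/3$. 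Finally I would verify the numerics — that $2/(\rho(d+1) + c) - 1$ is a genuine positive constant for every admissible $\rho$, including $\rho \to 0$ — which is routine given $\rho(d+1) \to 1$ as $\rho \to 0$ along these values.
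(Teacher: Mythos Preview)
Your high-level plan matches the paper's: take many disjoint copies of a ``needs two seeds'' gadget, glue them into one connected graph, and compute. The paper uses exactly this architecture, with a hub vertex $s$ joined to one designated port in each copy.

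The genuine gap is in your choice of gadget. With $Q = K_{d+1}$ (where $d = \lfloor 1/\rho\rfloor + 1$) and a single port per copy, leakage is fatal, not mild. Suppose you attach all ports to a hub $s$ (or chain them along a path; the analysis is the same). Put $s$ in the seed set, and put \emph{one} non-port vertex $v_i$ in each $Q_i$. Then the port $p_i$ has two infected neighbours ($s$ and $v_i$) and its threshold is still $2$ since $\rho(d+1) < 1 + 2\rho < 2$; so $p_i$ becomes infected, and now every other vertex of $Q_i$ sees both $p_i$ and $v_i$ and falls. Thus $h_\rho(G) \le t+1$, giving $h_\rho(G)/(\rho n) \to 1/(\rho(d+1)) < 1$, the wrong direction entirely. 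Your fallback ``replace $2$ by $2-o(1)$ on average'' does not rescue this: the true requirement drops to essentially $1$ per gadget, not $2-o(1)$. The problem is structural: in a clique the port is adjacent to \emph{every} other vertex, so once the port is infected (via one external plus one internal hit) it immediately supplies the second infected neighbour to everyone.

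The paper's fix is to insulate the port. Its gadget $H$ has $2\lfloor 1/\rho\rfloor + 1$ vertices: a clique $B$ of size $\lfloor 1/\rho\rfloor$, an independent set $A$ of the same size joined completely to $B$, and the port $u$ joined only to $A$. The hypotheses $\rho<1/3$ and $1/\rho - \lfloor 1/\rho\rfloor > 1/2$ make every vertex of $H$ have threshold exactly $2$. Now even if $s$ is infected, $u$ still needs a neighbour in $A$; a vertex of $A$ needs two hits from $B\cup\{u\}$; and a vertex of $B$ needs two hits from $A\cup B$. A short case check shows that a single seed in $H_i$ can never get the process started, so every contagious set meets each $H_i$ in at least two vertices. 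With $|V(H)| = 2\lfloor 1/\rho\rfloor+1$ this yields $h_\rho(G) \ge 2k$ against $n = k(2\lfloor 1/\rho\rfloor+1)+1$, and the ratio $2/(\rho(2\lfloor 1/\rho\rfloor+1)) = 1/(\rho\lfloor 1/\rho\rfloor + \rho/2)$ exceeds $1$ precisely because $\rho\lfloor 1/\rho\rfloor < 1 - \rho/2$ under the fractional-part hypothesis. So the missing idea is not the gluing but the design of a leak-proof gadget; the clique is too homogeneous for this.
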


\begin{proof}
Roughly speaking, we use the integrality gap to produce a number of graphs $H_i$ each requiring two vertices to become infected, while containing slightly fewer than $2/\rho$ vertices. We connect these graphs together to a graph $G$ in a way that ensures that even if the whole graph $G$ except one $H_i$ is infected, we then still need at least two additional infected vertices from $H_i$ to infect the vertices in $H_i$. This ensures that a contagious set must contain at least two vertices from every $H_i$, while the size of these subgraphs bounds $h_{\rho}(G)$ away from $\rho n$.

Formally, set $\gamma=1/\rho-\lfloor1/\rho\rfloor-1/2$ and choose $k\in\mathbb N$ such that $k>\max\{1/(2\gamma),\rho N_0/2\}$ and $\eps\in\mathbb R$ such that $0<\eps<\frac{\rho(\gamma-1/(2k))}{1-\rho(\gamma-1/(2k))}$. Let $A$ and $B$ be two disjoint sets of $\lfloor1/\rho\rfloor$ vertices and $u$ be an additional vertex. We define a graph H with vertex set $A\cup B\cup\{u\}$ and edge set $\{\{v,w\}\mid v,w\in B \mbox{ or } v\in A,w\in B \mbox{ or } v=u,w\in A\}$. In other words, $B$ is a clique, and we add a complete bipartite graph from $A$ to $B$ and all edges from the vertices of $A$ to $u$. 

\begin{center}
\begin{figure}[h]
\begin{center}
\pgfdeclarelayer{background}
\pgfsetlayers{background,main}
\begin{tikzpicture}[scale=\genericScale]

		\def\z{5}
		\foreach \x in {1,2,...,\z}{
			\draw (\x*360/\z: 1cm) node (B\x) {};
			\fill (B\x.west) circle (\genericCirclePt);
		}

			\foreach \x in {1,2,...,\z}{
			\path (2,2*\x/\z -1.2) node (A\x) {};
		}

		\foreach \x in {1,2,...,\z}{
			\foreach \i in {1,2,...,\z}{
				\draw[line width=\genericThickness] (B\x.west)--(B\i.west);
			}
		}
		\begin{scope}

			\path [clip] (B4.south)--(B5.east)--(B1.north)--(A5.north)--(A1.south)--cycle;

			\foreach \x in {1,2,...,\z}{
				\foreach \i in {1,2,...,\z}{
				\draw[line width=\genericThickness][gray] (A\x.west)--(B\i.west);
			}
		}
		\end{scope}
	
		\path (3,0) node (u) {};
		\fill (u.west) circle (\genericCirclePt);
		
		\foreach \x in {1,2,...,\z}{
			\fill (A\x.west) circle (\genericCirclePt);
			\draw[line width=\genericThickness] (A\x.west)--(u.west);
		}

	\end{tikzpicture}
\caption{$H$ when $\lfloor1/\rho\rfloor=5$}
\label{fig:IntegralGapSubgraph}
\end{center}
\end{figure}
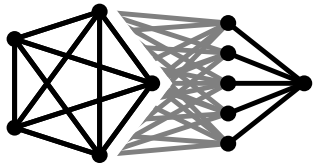
\end{center}

Let $H_1,\cdots,H_k$ be vertex disjoint copies of $H$ and denote by $u_1,\cdots,u_k$ the vertices corresponding to $u$. Let $s$ be another additional vertex. We define a graph $G$ to be the union of the $H_i$ and $\{s\}$ with the additional edges $\{\{s,u_i\}\mid i\in[k]\}$. Note that $n=|V(G)|=k|V(H)|+1\geq N_0$ and that a contagious set for $G$ contains at least $2$ vertices from each $H_i$. This follows since each vertex in $H_i$ requires two infected neighbours to become infected. The vertex $u$ may gain one infected neighbour if $s$ is infected but to become infected, at least one vertex from $A$ must also be infected. For a vertex from $A$ to become infected, then either two vertices from $B$ must be infected already, or one vertex from $B$ and $u$ must have been previously infected. Equally, no vertex from $B$ can become infected without two infected neighbours from within $A\cup B$. From this we see that only $u$ can become infected without two vertices from $H_i$ being in the contagious set, however the infection of $u$ requires an infected vertex in $A$ and will only lead to the infection of $H_i$ if there was also a vertex infected in $B$.
Thus we have
\begin{align*}
h_{\rho}(G)&\geq 2k=\frac{2k}{\rho(k|V(H)|+1)}\rho n
=\frac{2}{\rho(2\lfloor\frac{1}{\rho}\rfloor+1)+\rho/k}\rho n
= \frac{1}{\rho\lfloor\frac{1}{\rho}\rfloor+\rho/2+\rho/2k}\rho n\\
&= \frac{1}{1-\rho(\gamma-1/(2k))}\rho n
= \left(1+\frac{\rho(\gamma-1/(2k))}{1-\rho(\gamma-1/(2k))}\right)\rho n
> (1+\eps)\rho n\;.
\end{align*}
\end{proof}
Although this demonstrates that for a given $\rho$, it is not possible to give a bound tending to $\rho n$ as $n\rightarrow \infty$, it remains possible that a bound of the form $(\rho+o(\rho))n$ exists. In other words, it may be true that for sufficiently small~$\rho$, sufficiently large graphs have contagious sets whose size is close to $\rho n$. We formulate this formally as the following question.

\begin{qu}
Is it true that for every $\eps>0$ there exist $\rho_0\in (0,1]$ and $N_0\in\mathbb N$ such that for every $\rho\in(0,\rho_0]$ and every $n\geq N_0$ the following holds? If $G$ is a connected graph of order $n$, then
\[h_{\rho}(G)<(1+\eps)\rho n\;.\]
\end{qu}

\section*{Acknowledgements}

We would like to thank the anonymous referees for their careful reviews of the manuscript and for their many helpful comments.

\bibliographystyle{plain}
\bibliography{MinContagiousSet}

\end{document}